\def\paragraph{\@startsection{paragraph}{4}%
  \z@\z@{-\fontdimen2\font}%
  {\normalfont\bfseries}}
\newtheorem{introthm}{Theorem}
\newtheorem{theorem}{Theorem}[section]
\newtheorem{lemma}[theorem]{Lemma}
\newtheorem{proposition}[theorem]{Proposition}
\theoremstyle{definition}
\newtheorem{remark}[theorem]{Remark}
\newcommand{\N}{\mathbb{N}}
\newcommand{\R}{\mathbb{R}}
\begin{document}
\pagebreak


\title{Isometries of the Ebin metric}

\author{David Lenze}

\address
  {Karlsruher Institut f\"ur Technologie\\ Fakult\"at f\"ur Mathematik \\
Englerstr. 2 \\
76131 Karlsruhe,
Germany}
\email{david.lenze@kit.edu}

\begin{abstract}  We study the space of Riemannian metrics over a compact manifold equipped with the Ebin metric. We characterize its self-isometries and prove that two such spaces are isometric if and only if their underlying manifolds are diffeomorphic.
\end{abstract}

\maketitle

\renewcommand{\theequation}{\arabic{section}.\arabic{equation}}
\pagenumbering{arabic}
\section{Introduction}

\subsection{Main results}
Let $M$ be an $n$-dimensional compact and smooth manifold, and let $\mathcal M$ denote the infinite dimensional space of smooth Riemannian metrics on $M$. In \cite{MR267604}, Ebin introduced a natural $L^2$-type Riemannian structure on $\mathcal{M}$, now known as the \emph{Ebin metric}:
\[(h,k)_{g} = \int \operatorname{tr}(g^{-1}hg^{-1}k)dV_g,\]
where $g\in \mathcal M$, $h,k\in T_g\mathcal M \cong \Gamma(S^2 \, T^\ast M)$. Here, $g^{-1}h$ is interpreted as the $(1,1)$-tensor derived from $h$ by raising an index using $g^{-1}$.

Freed and Groisser (\cite{MR1027070}) and later Gil-Medrano and Michor (\cite{MR1107281}) in more generality investigated the curvature and geodesics of the resulting space by means of explicitly determining structures like the Levi-Civita connection and the curvature tensor of this infinite-dimensional metric -- each requiring a theory of infinite-dimensional Riemannian geometry to make sense of these formal calculations.

In contrast to this approach, a new metric perspective emerged through the work of Clarke \cite{MR3010152}, who showed that the Ebin metric induces a distance $d_E$ on $\mathcal M$ (a non-trivial result in the infinite-dimensional context, cf. \cite{MR2148075, MR2201275} for examples of vanishing induced distances), that the resulting metric space is incomplete, and that its completion is a CAT$(0)$ space.

Recently Cavallucci in \cite{C} provided a shorter, more conceptual proof of a strengthened result: the completion is CAT$(0)$ and, surprisingly, its isometry class \textit{depends only on the dimension of the underlying manifold}. Thus, analyzing the completion's geometry yields no information about the underlying manifold beyond its dimension.

In sharp contrast to Cavallucci's finding, we show that the isometry class of the uncompleted space $(\mathcal M(M),d_E)$ depends on $M$ in the strongest plausible way:
\begin{introthm}\label{main}
	Let $M$ and $N$ be compact and smooth manifolds. $(\mathcal M(M),d_E)$ is isometric to $(\mathcal M(N),d_E)$ if and only if $M$ is diffeomorphic to $N$. 
\end{introthm}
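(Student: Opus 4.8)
\emph{Proof strategy.} The \emph{if} direction is the easy one: any diffeomorphism $\varphi\colon M\to N$ induces the pullback $\varphi^*\colon \mathcal M(N)\to\mathcal M(M)$, and because the integrand $\operatorname{tr}(g^{-1}hg^{-1}k)$ and the volume form $dV_g$ are all natural with respect to diffeomorphisms, the change-of-variables formula gives $(\varphi^*h,\varphi^*k)_{\varphi^*g}=(h,k)_g$; thus $\varphi^*$ is an isometry of the weak Riemannian structures and hence of $(\mathcal M,d_E)$. All of the work is in the converse.

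The starting point for the converse is Clarke's explicit description of $d_E$, which exhibits the Ebin metric as \emph{ultralocal}: the energy of a path $(g_t)$ is the fibrewise integral $\int_M\!\int_0^1\operatorname{tr}((g^{-1}\dot g)^2)\,dV_g\,dt$, so minimisation decouples over the points of $M$ and one obtains $d_E(g_0,g_1)^2=\int_M \delta\big(g_0(x),g_1(x)\big)^2\,dx$, where $\delta$ is the Riemannian distance in the finite-dimensional model space $\mathcal P=\mathcal P(n)$ of inner products on an $n$-dimensional vector space, equipped with the Freed--Groisser metric $\operatorname{tr}(q^{-1}\cdot\,q^{-1}\cdot)\sqrt{\det q}$ (a nonpositively curved metric cone whose apex is the degenerate inner product). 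In other words $(\mathcal M(M),d_E)$ is the space of \emph{smooth} sections inside a nonlinear $L^2$-product of copies of $\mathcal P$ over $M$, whose completion --- adding all finite-energy measurable sections --- is the CAT$(0)$ space of Clarke and Cavallucci that depends only on $n$. The whole difficulty is to see that, \emph{before} completing, the smooth sections still remember $M$.

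The plan is to reconstruct the smooth manifold from the metric space via a nonlinear Banach--Stone/Lamperti theorem. First I would recover $n$ (e.g.\ from the completion, which is an invariant of $(\mathcal M,d_E)$, or from the structure of the fibre cone). The heart is a \emph{rigidity theorem}: every isometry $F\colon(\mathcal M(M),d_E)\to(\mathcal M(N),d_E)$ is \emph{fibred}, i.e.\ there is a bijection $\tau$ of base points and fibrewise isometries $\Phi_x\colon\mathcal P_{\tau(x)}\to\mathcal P_x$ with $F(g)(x)=\Phi_x\big(g(\tau(x))\big)$. To produce $\tau$ I would exploit the additivity $d_E(g_0,g_1)^2=\int_A(\cdots)+\int_B(\cdots)$ over a partition $M=A\sqcup B$: fixing a basepoint $g$, the metrics coinciding with $g$ off a set $A$ form a subset $\mathcal M_A(g)$, and the relation ``$\mathcal M_A(g)$ and $\mathcal M_B(g)$ meet only in $g$ and satisfy the Pythagorean identity $d(g_1,g_2)^2=d(g_1,g)^2+d(g,g_2)^2$, with a common join'' characterises disjointness of supports purely metrically. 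Since $F$ preserves geodesics and midpoints (the completion is CAT$(0)$, so these are intrinsic), it preserves this lattice of supports, yielding $\tau$ as an isomorphism of the measure algebras of $M$ and $N$ over which $F$ is fibrewise isometric.

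What a measure-algebra isomorphism alone gives is nothing --- every $M$ is a non-atomic standard space --- and this is exactly why the completion forgets everything but $n$. The decisive and hardest step is therefore to promote $\tau$ to a diffeomorphism using that $F$ maps \emph{smooth} sections bijectively to smooth sections: a merely measurable $\tau$, or a non-smoothly-varying field $(\Phi_x)$, would send some smooth metric to a non-smooth section, contradicting $F(\mathcal M(M))=\mathcal M(N)$; pushing this through should force $\tau$ to be a homeomorphism and then a diffeomorphism, with $(\Phi_x)$ a smooth bundle map, giving $M\cong N$. Running the same analysis with $N=M$ then identifies $\operatorname{Isom}(\mathcal M(M),d_E)$ as the group of fibrewise-isometric, smoothness-preserving bundle automorphisms covering diffeomorphisms (that is, $\operatorname{Diff}(M)$ together with the orthogonal-gauge and scaling symmetries of the fibre cone). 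I expect the main obstacle to be precisely this rigidity theorem --- ruling out isometries that ``mix'' fibres, and, among the fibred ones, carrying out the smoothness upgrade of $\tau$ --- since this is where the infinite-dimensional nonlinear analogue of Lamperti's theorem must be established, and where the distinction between $\mathcal M(M)$ and its far more symmetric completion is genuinely used.
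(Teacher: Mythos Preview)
Your high-level plan matches the paper's, and you correctly identify the two hard steps; but in both places the paper supplies a concrete mechanism that your outline lacks, and without which the argument does not close.

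For the fibred-isometry step, your Pythagorean/support-lattice heuristic is the right opening move --- it is essentially the isometric localisation of Proposition~\ref{loc}, and it does yield an a.e.\ bijection $\tau$ with $\tau_*\mu\sim\mu$ and the integrated identity $\int_A d^2(f,g)\,d\mu=\int_{\tau^{-1}(A)}d^2(F(f),F(g))\,d\mu$. But passing from this to the \emph{pointwise} form $F(g)(x)=\Phi_x(g(\tau(x)))$ requires ruling out fibre-mixing, and here the paper invokes an independent ingredient you do not have: Theorem~\ref{cone_prop_a} shows that Euclidean cones over geodesically complete CAT$(0)$ spaces are \emph{affinely rigid}, hence $\overline{P(n)}\cong C_0(P_1(n))$ is, and only with this does the nonlinear Lamperti decomposition of Theorem~\ref{L^2isometry} go through. (Note too that the $\Phi_x$ are \emph{dilations}, not isometries, with factor $(d(\tau_*\mu)/d\mu)^{-1/2}$; $\tau$ only preserves the measure class.) For the smoothness upgrade, ``a non-smooth $\tau$ would produce a non-smooth section'' is not a proof: the variable dilations $\Phi_x$ could in principle absorb irregularity, and you offer no way to disentangle the regularity of $\tau$ from that of $(\Phi_x)$. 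The paper's device is algebraic and worth internalising: since each $\Phi_x$ fixes the cone tip, the extended isometry is $\R^+$-homogeneous, so $\overline F(f\cdot g)=\hat f\cdot\overline F(g)$ with $\hat f=f\circ\tau$; because smooth metrics go to smooth metrics, $f\mapsto\hat f$ extends to a ring isomorphism $C^\infty(M)\to C^\infty(N)$, and every such isomorphism is pullback by a diffeomorphism (the maximal ideals of $C^\infty(M)$ being the vanishing ideals $I_p$). This produces the diffeomorphism in one stroke, with no regularity bootstrap needed.
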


To set the stage for our study of the Ebin metric, we begin by recalling the structure of the space of Riemannian metrics in more detail:
let $E:=E(M)$ denote the fibre bundle $\bigsqcup_{p\in M} S^2_+(T_pM)$ of symmetric, positive definite $(0,2)$-tensors on $M$, where the fibre is the cone of symmetric, positive definite $n\times n$ matrices, denoted $P(n)$. The space of Riemannian metrics on $M$ is precisely the space of smooth sections of $E$, i.e. $\mathcal M=\Gamma(E)$.  

Now we fix a background metric $g_0$ on $M$, and observe that: \begin{align*} (h,k)_{g} = \int \operatorname{tr}(g^{-1}hg^{-1}k)\sqrt{\det(g_0^{-1}g)} dV_{g_0} .\end{align*} The expression $\langle a,b \rangle_{x,p}:= \operatorname{tr}(x^{-1}ax^{-1}b)\sqrt{\det(g_0(p)^{-1}x)}$ defines a Riemannian metric on the fibres $E_p$ for each $p\in M$, where $x\in E_p$ and $a,b\in T_{x}E_p \cong S^2(T_pM)$, and we denote by $d_p$ the distance induced by this metric. 
In short, the Ebin metric is obtained by integrating the fibre-wise Riemannian metrics $\langle h(p),k(p)\rangle_{g(p),p}$ over $M$. 

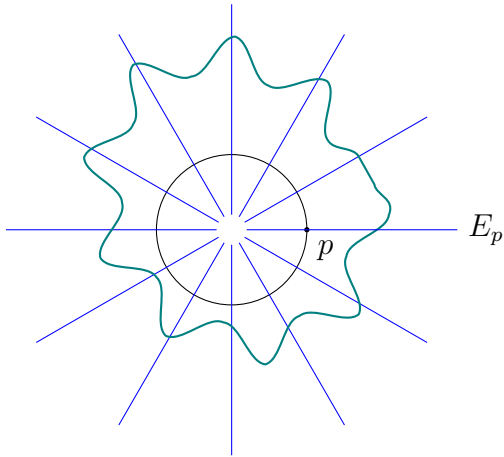
\begin{figure}[b]\label{E}
    \begin{tikzpicture}
    \def\R{1} 
    \def\RayStart{0.2}
    \def\RayEnd{3}

    \def\CenterX{0.25} 
    \def\CenterY{0}     
    \def\XRadius{1.7} 
    \def\YRadius{2}
    \def\RotationAngle{30} 

    \def\WobbleAmplitude{7pt} 
    \def\WobbleSegmentLength{35pt} 
    \draw[] (0,0) circle(\R cm);
    
    \foreach \angle in {0, 30, 60, 90, 120, 150, 180, 210, 240, 270, 300, 330 } {
        \ifdim\angle pt=0pt\else
            \draw[blue] (\angle:\RayStart) -- (\angle:\RayEnd);
        \fi
    }
    
    \draw[teal, thick, 
        decorate, 
        decoration={snake, segment length=\WobbleSegmentLength, amplitude=\WobbleAmplitude},
        rotate=\RotationAngle, shift={(\CenterX,\CenterY)}] 
        (0,0) ellipse (\XRadius cm and \YRadius cm)
        ;

    \fill (\R, 0) circle (1pt) node[anchor=north west] {$p$}; 

    \draw[blue] (0:\RayStart) -- (0:\RayEnd) 
        node[black, anchor= west] {$E_p$}; 
\end{tikzpicture}
  \caption{\small Fibre bundle $E$ over the base manifold $M$. The manifold is represented by the black circle; a generic smooth section, i.e. Riemannian metric $g\in \Gamma(E)$ by the green curve, and typical fibres by the blue rays.}
\end{figure}

This integrated structure extends to the distance: by \cite[Theorem 3.8]{MR3010152} (see also \cite[Theorem B.1]{MR4624071} for a simplified proof), the distance induced by the Ebin metric is an $L^2$-type integral over the fibre-wise distances. More specifically, up to a choice of normalization: \[d_E(g_1,g_2)=\left(\int_M d_p^2(g_1(p),g_2(p)) \, dV_{g_0}(p)\right)^{\frac{1}{2}}. \]
We emphasize that while the metrics $d_p$ on the fibres depend on the choice of the background metric $g_0 \in \mathcal{M}(M)$, this dependence is precisely balanced by the integration measure $dV_{g_0}$ such that the resulting integrated expression is independent of $g_0$.

The spaces $(E_p,d_p)$ are incomplete and isometric to $P(n)$ endowed with the metric $\operatorname{tr}(x^{-1}ax^{-1}b)\sqrt{\det(x)}$, where $x\in P(n)$ and $a,b\in T_xP(n)\cong S(n)$, and we denote the induced metric by $d^\prime_{P(n)}$.

The disjoint union $\text{Isom}(E):=\bigsqcup_{p\in M} \text{Isom}(E_p)$ forms a smooth fibre bundle with fibre the Lie group $G:=\text{Isom}(P(n),d^\prime_{P(n)})$; the space of smooth sections $\Gamma(\text{Isom}(E))$ forms a group by fibre-wise composition.

Both $\text{Diff}(M)$ and $\Gamma(\text{Isom}(E))$ are subgroups of the isometry group of $\mathcal M(M)$ with respect to the Ebin metric:

\begin{enumerate}
	\item Let $\varphi \in \text{Diff}(M)$, then by \cite{MR267604} the pullback $g\mapsto \varphi^\ast g$ is an isometry of $(\mathcal M(M),d_E)$. 
	\item Let $\alpha \in \Gamma(\text{Isom}(E))$, then the map $g\mapsto \alpha (g)$ is an isometry of $(\mathcal M(M),d_E)$, where $\alpha(g)_p:=\alpha_p(g_p)$.
\end{enumerate}

We show that the isometries of $(\mathcal M(M),d_E)$ are rigid, arising as compositions of isometries of the above types. Indeed:

\begin{introthm}\label{isom}
	Let $M$ be a compact and smooth manifold. Then $$\text{Isom}(\mathcal M(M),d_E)=\Gamma(\text{Isom}(E(M)))\rtimes\text{Diff}(M).$$
\end{introthm}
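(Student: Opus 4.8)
The plan is to prove the two inclusions separately. The inclusion $\Gamma(\text{Isom}(E(M)))\rtimes\text{Diff}(M)\subseteq\text{Isom}(\mathcal M(M),d_E)$ is essentially given: both factors act by isometries by items (1) and (2) in the excerpt, and one checks that these two subgroups generate a semidirect product by verifying that $\text{Diff}(M)$ normalizes $\Gamma(\text{Isom}(E))$ under conjugation (the pullback of a fibrewise isometry by $\varphi$ is again a fibrewise isometry over the permuted base points) and that the two subgroups intersect trivially. The substantial content is the reverse inclusion: every isometry $F$ of $(\mathcal M(M),d_E)$ decomposes as $F=\alpha\circ\varphi^\ast$ for some $\alpha\in\Gamma(\text{Isom}(E))$ and $\varphi\in\text{Diff}(M)$.

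The strategy for the hard inclusion is to recover the base manifold $M$, and then the fibre structure, purely from the metric geometry of $(\mathcal M(M),d_E)$, exploiting its presentation as an $L^2$-integral $d_E(g_1,g_2)^2=\int_M d_p^2(g_1(p),g_2(p))\,dV_{g_0}(p)$ of the fibrewise distances. The key idea is that this integral representation endows $\mathcal M(M)$ with the structure of a (nonlinear) $L^2$-type space, and I expect the proof to hinge on a rigidity principle for such spaces: an isometry must preserve the underlying ``measurable/pointwise'' structure, hence must be induced by a measure-class-preserving transformation of the base (which, by a smoothness/regularity argument, is a diffeomorphism $\varphi$) together with a fibrewise isometry $\alpha_p\in G=\text{Isom}(P(n),d'_{P(n)})$ at each point. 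Concretely, I would first characterize an intrinsic metric invariant that detects ``support'' or ``locality'' — for instance, using the additivity of the squared distance over disjoint regions of $M$ to reconstruct the Boolean algebra of measurable subsets of $M$ up to null sets, thereby recovering the measure space $(M,dV_{g_0})$ and forcing $F$ to act by a point transformation of $M$.

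The main obstacle, and the step I expect to require the most care, is the regularity bootstrap: the metric-space structure a priori only yields a measurable/measure-preserving bijection of the base and measurable fibre identifications, whereas the theorem asserts a \emph{smooth} diffeomorphism $\varphi$ and a \emph{smooth} section $\alpha\in\Gamma(\text{Isom}(E))$. Upgrading measurability to smoothness is delicate: I would control it by testing the isometry $F$ against smooth curves and smooth sections $g\in\mathcal M(M)=\Gamma(E)$, using that $F$ must send smooth sections to smooth sections and that the fibrewise isometry group $G$ is a finite-dimensional Lie group (so the pointwise data $\alpha_p$ varies in a rigid, finite-dimensional way). A secondary ingredient is a precise understanding of $G=\text{Isom}(P(n),d'_{P(n)})$ itself — I would need its description (presumably generated by the natural $\text{GL}(n)$-congruence action together with any residual metric symmetries such as inversion or scaling) to conclude that the fibrewise ambiguity is exactly captured by $\Gamma(\text{Isom}(E))$.

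Finally, once the base transformation $\varphi$ and the fibrewise isometries $\alpha_p$ are extracted with the correct regularity, I would assemble them: define $\alpha:=F\circ(\varphi^\ast)^{-1}$ and verify directly that $\alpha$ fixes the base pointwise and acts fibrewise by elements of $G$, hence $\alpha\in\Gamma(\text{Isom}(E))$, giving the desired factorization $F=\alpha\circ\varphi^\ast$. Theorem~\ref{main} then follows as a corollary: an isometry between $(\mathcal M(M),d_E)$ and $(\mathcal M(N),d_E)$ would, by the same base-reconstruction argument, induce a diffeomorphism between $M$ and $N$, and conversely any such diffeomorphism yields an isometry by pullback.
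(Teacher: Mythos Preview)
Your overall architecture matches the paper's (pass to an $L^2$ description, establish pointwise rigidity, upgrade regularity, assemble), but two of your key steps are placeholders that miss the actual mechanisms. For the $L^2$-rigidity, you propose to reconstruct the measure algebra from additivity of the squared distance, yet you do not say why an arbitrary isometry must \emph{respect} that structure; that is precisely the hard part. The paper's route is through CAT$(0)$ geometry: one passes to the completion $\mathcal E(M)\cong L^2(M,\overline{P(n)})$, shows that $\overline{P(n)}\cong C_0(P_1(n))$ is a Euclidean cone over a geodesically complete CAT$(0)$ space and hence \emph{affinely rigid} (Theorem~\ref{cone_prop_a}), and uses this to prove that every metric product splitting $L^2(\Omega,X)=Y\times\overline Y$ is of the canonical form $L^2(A,X)\times L^2(A^c,X)$ (Theorem~\ref{affine_classical} and Proposition~\ref{loc}). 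That splitting rigidity is the engine behind the measure-algebra reconstruction; without affine rigidity or a substitute, your plan has no way to force an isometry to act pointwise. Note also that the fibrewise maps one obtains at this stage are \emph{dilations}, not isometries, because the target is a cone; they become isometries only after factoring out $\varphi^\ast$.

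For the smoothness bootstrap, ``testing against smooth curves and smooth sections'' is too vague and is not what works here. The paper exploits the cone structure: isometries of $C_0(P_1(n))$ fix the tip, so $\overline\gamma$ is $\R^+$-homogeneous and $\overline\gamma(f\cdot g)=(f\circ\varphi)\cdot\overline\gamma(g)$ for every $f\in C^\infty_+(M)$. Since $\overline\gamma$ preserves $\mathcal M(M)$, the function $f\circ\varphi$ is smooth for every smooth $f$, so $f\mapsto f\circ\varphi$ extends to a ring automorphism of $C^\infty(M)$, and such automorphisms are induced by diffeomorphisms --- this is how the a.e.\ bijection $\varphi$ is upgraded. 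Smoothness of the section $\alpha$ is then obtained by choosing points $x_0,\dots,x_n\in P_1(n)$ so that the orbit map $g\mapsto(g\cdot x_0,\dots,g\cdot x_n)$ immerses $G$ into $P_1(n)^{n+1}$, and checking that $p\mapsto(\alpha_p\cdot x_i)_i$ is smooth because $\alpha$ sends smooth metrics to smooth metrics. Neither the ring-automorphism trick nor the immersion argument is suggested by your proposal, and both are where the real content of the regularity step lies.
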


These results should be compared to recent work of Darvas \cite{MR4321209} on the isometries of the space of K\"ahler metrics with respect to the \textit{Mabuchi metric}. There it was shown that all isometries  are induced by biholomorphisms and anti-biholomorphisms of the manifold.

\subsection{General Strategy}

The proofs of our main results will depend on a rigidity result for the \emph{isometries of the completion} of the space of Riemannian metrics with respect to the Ebin metric.
Before stating it, we recall the explicit description of the completion offered in \cite{C}.

Let $\overline {P(n)}$ denote the metric completion of $P(n)$ with respect to $d^\prime_{P(n)}$.
We extend the fibre bundle $\pi_E: E\to M$ to a fibre bundle $\pi_{\overline E}: \overline E \to M$ with fibre $\overline{P(n)}$. Each fibre $\overline E_p$, $p\in M$, is equipped with a metric $d_p$ such that $(\overline E_p, d_p)$ is isometric to $(\overline{P(n)}, d_{P(n)}^\prime)$ and there exists an inclusion of fibre bundles $E \hookrightarrow \overline E$ which, when restricted to the fibres, is an isometric embedding $(E_p,d_p) \hookrightarrow (\overline E_p,d_p)$ with dense image.

\begin{figure}[t]\label{E_bar}
    \begin{tikzpicture}
    \def\R{1} 
    \def\RayStart{0.2}
    \def\RayEnd{3}

    \def\CenterX{0.25} 
    \def\CenterY{0}     
    \def\XRadius{1.7} 
    \def\YRadius{2}
    \def\RotationAngle{30} 

    \def\WobbleAmplitude{7pt} 
    \def\WobbleSegmentLength{35pt} 
    \draw[] (0,0) circle(\R cm);
    
    \foreach \angle in {0, 30, 60, 90, 120, 150, 180, 210, 240, 270, 300, 330 } {
        \ifdim\angle pt=0pt\else
            \draw[blue, opacity=0.5] (\angle:\RayStart) -- (\angle:\RayEnd);
        \fi
    }
    
    \draw[teal, opacity=0.9, 
        decorate, 
        decoration={snake, segment length=\WobbleSegmentLength, amplitude=\WobbleAmplitude},
        rotate=\RotationAngle, shift={(\CenterX,\CenterY)}] 
        (0,0) ellipse (\XRadius cm and \YRadius cm)
        ;

    \fill (\R, 0) circle (1pt) node[anchor=north west] {$p$}; 

    \draw[blue, opacity=0.9] (0:\RayStart) -- (0:\RayEnd) 
        node[black, anchor= west] {$\overline E_p$}; 
        
    \draw[blue, thick] (0,0) circle(\RayStart cm);
    
    \draw[red,  thick] 
        (20:0.9) .. controls (40:2.1) and (60:2.4) .. (80:1.1)
        
        -- (120:1.6)
        
        ; 
    
    \draw[red, thick] 
        (120:1.2) 
        
        -- (160:1.8) 
        -- (190:1.3)
        
        -- (220:1.8)
        
        ; 
        
    \draw[red, thick, 
        decorate, 
        decoration={snake, segment length=10pt, amplitude=2pt}]
        (220:1.8) -- (250:1.4); 
        
    \draw[red, thick]
        (250:1.1) .. controls (270:0.2) and (300:2.4) .. (330:0.8);
        
     \draw[red, thick]
        (330:0.8) .. controls (0:0.1) and (0:0.05) .. (20:0.6);

\end{tikzpicture}

  \caption{\small The \emph{completed} bundle $\overline E$. The smaller blue circle represents the elements added to complete the fibres (blue rays) $E_p$ with respect to $d_p$ -- thereby forming new fibres $\overline E_p$. The red piecewise continuous curve represents an $L^2$-section $\sigma \in \mathcal E(M)$. Compare with Figure~1.} 
\end{figure}
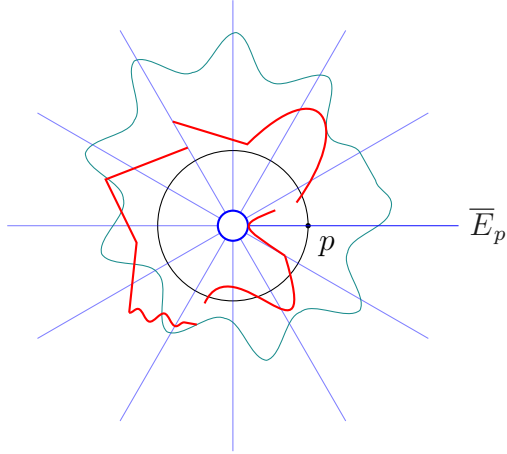

An $L^2$-section of this bundle is defined as a section $\sigma:M\to \overline E$ which is such that $\int_M d_p^2(\sigma(p),g_0(p)) \,dV_{g_0}(p)<\infty$. The space of $L^2$-sections of $\overline E$ is denoted by $\mathcal E(M)$ and contains the space of smooth Riemannian metrics, i.e. $\mathcal M(M) =\Gamma(E) \subset \mathcal E(M)$. We equip it with the so-called $L^2$-metric which naturally extends the Ebin metric: $$d_{L^2}(\sigma,\sigma^\prime):=\left(\int_Md_p^2(\sigma(p),\sigma^\prime(p))\,dV_{g_0}(p) \right)^{\frac{1}{2}}.$$ 

In \cite{C}, Cavallucci showed that the metric completion of the space $(\mathcal M(M),d_E)$ can be identified with the space $(\mathcal E(M),d_{L^2})$ of $L^2$-sections of the extended bundle $\overline {E(M)}$. 

Note that two measures $\mu$ and $\nu$ on a measurable space $\Omega$ are \emph{equivalent}, denoted $\mu \sim \nu$, if they are mutually absolutely continuous, i.e. $\mu \ll \nu$ and $\nu \ll \mu$. With this, we can finally state the promised result:

\begin{introthm}\label{isom_completion}
		Let $M$ be a compact, smooth manifold with a fixed Riemannian metric $g_0$. If $\gamma: (\mathcal E(M),d_{L^2}) \to (\mathcal E(M),d_{L^2})$ is an isometry, then there exist an almost everywhere bijective $\varphi:M\to M$ with $\varphi_\ast\mu_{g_0} \sim \mu_{g_0}$, and a family $\{\tau_p\}_{p\in M}$ of dilations $\tau_p:(\overline{E_{\varphi(p)}},d_{\varphi(p)}) \to  (\overline{E_p},d_p)$, such that for all $\sigma\in \mathcal{E}(M)$, we have $\gamma(\sigma)_p=\tau_p(\sigma_{\varphi(p)})$.
\end{introthm}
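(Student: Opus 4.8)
The strategy is a metric-space analogue of the Banach--Lamperti theorem. By construction $(\mathcal{E}(M),d_{L^2})$ is an $L^2$-direct integral of the fibrewise completions $(\overline{E_p},d_p)$, each isometric to the complete CAT$(0)$ metric cone $(\overline{P(n)},d'_{P(n)})$, so the whole space is complete, CAT$(0)$ and uniquely geodesic, with geodesics and midpoints computed fibrewise. An isometry $\gamma$ therefore automatically preserves geodesics, midpoints, convexity and flat subsets. The two tasks are then: to reconstruct the measure algebra of $(M,\mu_{g_0})$ intrinsically from the metric, extracting a base map $\varphi$, and then to disintegrate $\gamma$ into fibre maps $\tau_p$.

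For the first task I would fix two sections $\sigma,\sigma'$ with $\sigma_p\neq\sigma'_p$ almost everywhere and assemble the fibrewise geodesic segments into the set $K=\prod_p[\sigma_p,\sigma'_p]$. Since each fibre segment is a geodesic, the map $\rho\mapsto(p\mapsto d_p(\sigma_p,\rho_p))$ identifies $(K,d_{L^2})$ isometrically with the order box $\{f:0\le f\le\ell\}\subseteq L^2(M,\mu_{g_0})$, where $\ell(p)=d_p(\sigma_p,\sigma'_p)$; in particular $K$ is flat. The \emph{splices} of $\sigma$ and $\sigma'$ -- the sections $\rho_A$ equal to $\sigma$ on $A$ and to $\sigma'$ on $A^c$ -- are exactly the extreme points of $K$, and the correspondence $A\mapsto\rho_A$ realizes the measure algebra $\mathfrak{A}$ of $(M,\mu_{g_0})$. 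The crux is that $K$ together with its extreme-point structure can be characterized using only the CAT$(0)$ distance (as a suitable flat determined by $\sigma,\sigma'$), so that $\gamma$ carries splices to splices and hence induces a Boolean automorphism of $\mathfrak{A}$.

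By the classical representation of measure-algebra automorphisms (von Neumann/Maharam) on the standard, non-atomic measure space $(M,\mu_{g_0})$, this automorphism is implemented by an almost everywhere bijective measurable $\varphi:M\to M$ with $\varphi_*\mu_{g_0}\sim\mu_{g_0}$; mutual absolute continuity rather than measure preservation appears because the extreme-point structure of $K$ detects only null sets, not the numerical values of $\mu_{g_0}$. Compatibility of $\gamma$ with splices gives locality: if $\sigma=\sigma'$ a.e.\ on $A$ then $\gamma\sigma=\gamma\sigma'$ a.e.\ on $\varphi^{-1}(A)$; refining $A$ along $\mathfrak{A}$ forces $\gamma(\sigma)_p$ to depend only on $\sigma_{\varphi(p)}$, yielding a measurable family $\tau_p:(\overline{E_{\varphi(p)}},d_{\varphi(p)})\to(\overline{E_p},d_p)$ with $\gamma(\sigma)_p=\tau_p(\sigma_{\varphi(p)})$. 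Substituting this into the isometry identity $\int_M d_p(\gamma\sigma(p),\gamma\sigma'(p))^2\,dV_{g_0}=\int_M d_p(\sigma(p),\sigma'(p))^2\,dV_{g_0}$, localizing it to single fibres and changing variables by $\varphi$, shows that each $\tau_p$ rescales $d_{\varphi(p)}$ by a single factor $\lambda_p$, determined up to a square root by the Radon--Nikodym derivative relating $\mu_{g_0}$ and $\varphi_*\mu_{g_0}$; since $\overline{P(n)}$ is a metric cone it admits such self-similarities, so $\tau_p$ is a dilation.

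I expect the main obstacle to be precisely the intrinsic reconstruction of $\mathfrak{A}$: characterizing the flat box $K$ and its extreme points using only the CAT$(0)$ distance, and proving that an arbitrary isometry must respect them. The difficulty is that the fibres $\overline{P(n)}$ are themselves non-trivial CAT$(0)$ cones with their own flats, geodesics and isometries, so one must separate the \emph{base} flatness encoded by $\mathfrak{A}$ from genuinely fibrewise flatness and exclude isometries that shear distinct fibres against one another; one must also ensure the resulting correspondence $A\mapsto\rho_A$ is measurable in $p$. Once $\mathfrak{A}$ and the map $\varphi$ are in hand, the disintegration of $\gamma$ and the dilation computation follow from the $L^2$-structure by comparatively routine arguments.
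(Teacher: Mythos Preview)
Your proposal outlines a Banach--Lamperti style argument that is genuinely different from the paper's route. The paper does not attempt to reconstruct the measure algebra via flat boxes and extreme points; instead it reduces to a general rigidity theorem for isometries of $L^2(\Omega,X)$ whenever $X$ is a geodesically complete CAT$(0)$ space that is \emph{affinely rigid} with respect to CAT$(0)$ targets. The crucial ingredients are: (i) $\overline{P(n)}$ is a Euclidean cone over the geodesically complete CAT$(0)$ space $P_1(n)$, and such cones are shown to be affinely rigid; (ii) affine rigidity of the fibre forces every product splitting $L^2(\Omega,X)=Y\times\bar Y$ to be of the canonical form $L^2(A,X)\times L^2(A^c,X)$, because the projections onto the factors are affine maps into CAT$(0)$ spaces and hence must be dilations with indicator-function scaling (this uses Foertsch--Lytchak on affine images of Euclidean rectangles). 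The map $\varphi$ and the dilations $\tau_p$ then fall out of this splitting rigidity together with a density argument over a countable family of simple functions.

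The gap in your outline is exactly the one you flag: the intrinsic metric characterization of the box $K$. You have not supplied any description of $K$ that an arbitrary isometry must respect, and there is a concrete obstruction. The set $K$ is \emph{not} the CAT$(0)$ convex hull of $\{\sigma,\sigma'\}$ --- that hull is just the segment --- and any attempt to enlarge the segment to $K$ by adjoining parallel segments or maximal flat strips runs straight into the problem you name: the fibre $\overline{P(n)}$ already contains flat half-planes through its cone point and higher-rank flats inside $P_1(n)$, so $\mathcal{E}(M)$ contains many flat convex sets through $[\sigma,\sigma']$ that mix base and fibre directions and are not of box form. Separating the ``base'' flat $K$ from these is precisely what the paper's affine-rigidity step accomplishes; it is the mechanism that excludes the shearing isometries you worry about, and without it (or a substitute of comparable strength) your first task cannot be completed. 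Once $\varphi$ is in hand, your disintegration into fibre maps and the identification of $\tau_p$ as a dilation with factor governed by the Radon--Nikodym derivative are essentially the same as in the paper.
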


The key idea behind the proof of Theorem~\ref{isom} in Section~\ref{proof_main} relies on the fact that any isometry $\gamma:(\mathcal M(M),d_E) \to (\mathcal M(M),d_E)$ uniquely extends to an isometry $\overline\gamma:(\mathcal E(M),d_{L^2}) \to (\mathcal E (M),d_{L^2})$ of the completion, and therefore inherits the rigidity established in Theorem~\ref{isom_completion}: there exists an almost everywhere bijective $\varphi:M\to M$ with $\varphi_\ast\mu_{g_0} \sim \mu_{g_0}$ and a family $\{\tau_p\}_{p\in M}$ of dilations $\tau_p:(E_{\varphi(p)},d_{\varphi(p)}) \to  (E_p,d_p)$, such that $\gamma(g)_p=\tau_p(g_{\varphi(p)})$. 

We exploit the fact that $\gamma(g)\in \mathcal{M}(M)$ to show that $\varphi$ is almost everywhere equal to a diffeomorphism, allowing us to assume it is a diffeomorphism from the start. 

As mentioned above, by \cite{MR267604}, the pullback $g\mapsto\varphi^\ast g$ is an isometry with respect to the Ebin metric, and so $\gamma \circ (\varphi^\ast)^{-1}$ is also an isometry. By the above rigidity, this will have the form $(\gamma \circ (\varphi^\ast)^{-1})(g)_p=\alpha_p(g_p)$, with $\alpha:M \to \text{Isom}(E)$ a section of the smooth fibre bundle $\text{Isom}(E) =\bigsqcup_{p\in M} \text{Isom}(E_p,d_p)$. We show that this section is smooth and so $\text{Isom}(\mathcal{M}(M),d_E)= \Gamma(\text{Isom}(E)) \cdot \text{Diff}(M)$. The proof concludes by noting that the intersection $\text{Diff}(M)\cap \Gamma(\text{Isom}(E))$ is trivial and that $\Gamma(\text{Isom}(E))$ is a normal subgroup. Thus, we have the semi-direct product: $\text{Isom}(\mathcal{M}(M),d_E)= \Gamma(\text{Isom}(E)) \rtimes \text{Diff}(M). $

To prove Theorem~\ref{main}, we will similarly show that, given an isometry $\gamma:\mathcal{M}(M) \to \mathcal{M}(N)$, there exists a diffeomorphism $\psi:N \to M$, and a family $\{\kappa_p\}_{p\in N}$ of dilations $\kappa_p:E_{\psi(p)}(M) \to E_p(N)$ such that $\gamma(g)_p=\kappa_p(g_{\psi(p)})$. Our result follows as a direct consequence.

To set the background needed to understand the strategy behind the proof of Theorem~\ref{isom_completion}, we briefly recall the notion of $L^2$ spaces with metric space targets: 

The space $L^2(\Omega,X)$ consists of measurable, essentially separably valued maps $f:\Omega \to X$ from a finite measure space $(\Omega,\mu)$ to a metric space $(X,d)$ so that 
$
\int_\Omega d^2(f(\omega),x)\,d\mu(\omega)<\infty,
$
for some (and hence any) $x\in X$. 
The space is naturally equipped with the metric 
\[
d_{L^2}(f,g) = \left(\int_\Omega d^2(f(\omega),g(\omega))\, d\mu(\omega)\right)^{1/2}. \]

The formal similarity in the definitions of $\mathcal E (M)$ and  $L^2(\Omega,X)$ lies at the heart of Cavallucci's argument to show that the $L^2$-completion of the space of Riemannian metrics is CAT$(0)$. Indeed Cavallucci showed that $(\mathcal E(M),d_{L^2})$ is isometric to $L^2(M,\overline{P(n)})$, with $\overline{P(n)}$ the metric completion of $(P(n),d_{P(n)}^\prime)$ from above. The assertion then follows from the fact that $\overline{P(n)}$ is CAT$(0)$, and the fact that $L^2$-spaces with CAT$(0)$ targets are themselves CAT$(0)$ spaces. 

We will follow the same philosophy for the proof of Theorem~\ref{isom_completion}, by showing a corresponding rigidity for the isometries of $L^2(M,\overline{P(n)})$ which we will translate in Section~\ref{proof_isom_c} to the isometries of $(\mathcal E(M),d_{L^2})$.

Before clarifying this further, recall that a map $f: X \to Y$ between metric spaces is \emph{affine} if it sends geodesics to linearly reparametrized geodesics, and is a \textit{dilation} if the reparametrization factor is independent of the geodesic. A metric space $X$ is \textit{affinely rigid} if every affine map from $X$ to any metric space $Y$ is a dilation. $X$ is \textit{affinely rigid with respect to $\mathrm{CAT}(0)$ spaces} if this holds for all affine maps into any $\mathrm{CAT}(0)$ space $Y$. 

We will show in the Preliminaries that $\overline{P(n)}$ is a Euclidean cone over a geodesically complete CAT$(0)$ space, and in Section~\ref{cone} that such cones are affinely rigid.

 The promised rigidity for for the isometries of $L^2(M,\overline{P(n)})$ is thus a special case of the following considerably more general result of independent interest: 

\begin{introthm}\label{L^2isometry}
	Let $(\Omega,\mu)$ be a standard probability space and let $X$ be a separable, geodesically complete CAT$(0)$ space which is affinely rigid with respect to CAT$(0)$ spaces and not isometric to $\R$. A map $\gamma:L^2(\Omega,X)\to L^2(\Omega,X)$ is an isometry if and only if there exist an a.e. bijective $\varphi:\Omega\to \Omega$ with $\varphi_\ast \mu \sim \mu$, and an a.e. defined map $\rho:\Omega \to \operatorname{Dil}(X)$ with $\operatorname{dil}(\rho(\omega))=\left(\sqrt{\frac{d(\varphi_\ast \mu)}{d\mu}}\right)^{-1}$ and $\rho(\cdot)(x)\in L^2(\Omega,X)$ for some $x\in X$, such that $\gamma(f)(\omega)=\rho(\varphi(\omega))(f(\varphi(\omega))$ for $\mu$-a.e. $\omega \in \Omega$.
\end{introthm}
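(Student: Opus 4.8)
The plan is to prove the nontrivial (``only if'') direction; the converse is a direct change-of-variables computation: substituting $\omega'=\varphi(\omega)$ and using that each $\rho(\varphi(\omega))$ scales distances by $\operatorname{dil}(\rho(\varphi(\omega)))$, the factor $\operatorname{dil}(\rho(\omega'))^2=\big((d(\varphi_\ast\mu)/d\mu)(\omega')\big)^{-1}$ cancels exactly against the Radon--Nikodym weight produced by pushing $\mu$ forward along $\varphi$, so $d_{L^2}(\gamma f,\gamma g)=d_{L^2}(f,g)$, while $\rho(\cdot)(x)\in L^2(\Omega,X)$ guarantees that $\gamma$ maps $L^2$ into $L^2$. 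For the forward direction I first record two reductions. Since $X$ is a complete $\mathrm{CAT}(0)$ space, so is $L^2(\Omega,X)$; in particular it is uniquely geodesic with geodesics computed fibrewise, so the bijective isometry $\gamma$ is \emph{affine}, carrying geodesics to geodesics of the same speed and commuting with midpoints, barycentres and nearest-point projections onto closed convex sets. Second, affine rigidity forces $X$ to be \emph{irreducible}: if $X\cong X_1\times X_2$ nontrivially, the projection $X\to X_1$ is affine but scales a geodesic of fibre-speeds $(s_1,s_2)$ by $s_1/\sqrt{s_1^2+s_2^2}$, which is not constant. Hence $X$ has no Euclidean de Rham factor, and together with $X\not\cong\R$ this makes $L^2(\Omega,X)$ ``totally non-Euclidean''. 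This is precisely the feature that fails for $X=\R$, where $L^2(\Omega,\R)$ is a Hilbert space whose rotations violate the conclusion; since $\R$ is itself affinely rigid, it must be excluded by hand, which explains why both hypotheses are needed.

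The heart of the argument is a \emph{locality} statement: I will produce an automorphism $\Phi$ of the measure algebra of $(\Omega,\mu)$ such that $\gamma$ intertwines band decompositions, i.e.\ it maps the band $L^2(A,X)\times L^2(A^c,X)$ attached to a measurable $A$ to the band attached to $\Phi(A)$. The mechanism is that every splitting of $L^2(\Omega,X)$ as a nontrivial metric product (equivalently, every parallel, holonomy-invariant orthogonal splitting of its tangent cones, which are direct integrals $L^2(\Omega,T_{(\cdot)}X)$) must respect the fibres: because each fibre $X$ is irreducible and non-Euclidean, a parallel splitting cannot cut \emph{through} a fibre, so it is governed by a measurable assignment of each fibre to one side of the splitting, that is, by an element of the measure algebra. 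As $\gamma$ is an affine isometry it preserves metric product splittings, hence permutes these elements, yielding $\Phi$. Since $(\Omega,\mu)$ is a standard probability space, the classical (von Neumann) isomorphism theory realizes $\Phi$ by an a.e.\ bijective point map $\varphi:\Omega\to\Omega$, and as $\Phi$ preserves only the measure \emph{class}, $\varphi_\ast\mu\sim\mu$.

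With $\varphi$ in hand I conclude as follows. Composing $\gamma$ with a model isometry of the asserted form built from $\varphi$ reduces to the case where the resulting affine isometry lies over the identity of the measure algebra, i.e.\ preserves every band $L^2(A,X)$; a standard argument (if $f=g$ on $A$ then the images agree on $A$, refined via Lebesgue differentiation and separability of $X$) shows such a map is \emph{pointwise}, of the form $f(\omega)\mapsto T_\omega(f(\omega))$ for a measurable family of self-maps $T_\omega$ of $X$. Each $T_\omega$ inherits affineness from $\gamma$, since $L^2$-geodesics are fibrewise geodesics, so affine rigidity of $X$ makes every $T_\omega$ a dilation $\rho(\omega)\in\operatorname{Dil}(X)$. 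Unwinding the reduction and imposing that the composite be distance-preserving gives, via the identity $\int_\Omega \operatorname{dil}(\rho(\omega'))^2\,d(f,g)^2\,d(\varphi_\ast\mu)=\int_\Omega d(f,g)^2\,d\mu$, the factor relation $\operatorname{dil}(\rho(\omega))=\big(\sqrt{d(\varphi_\ast\mu)/d\mu}\big)^{-1}$, while $\rho(\cdot)(x)\in L^2$ records that $\gamma$ has range in $L^2$; this is exactly $\gamma(f)(\omega)=\rho(\varphi(\omega))(f(\varphi(\omega)))$.

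The main obstacle is the locality step, i.e.\ showing that $\gamma$ induces a measure-algebra automorphism. Over an atomic base this is the classical de Rham uniqueness of irreducible factors, but for a non-atomic $\Omega$ there is no finest splitting and no genuine factorization to invoke, so the argument must be carried out measurably: one must classify the metric-product (parallel) splittings of $L^2(\Omega,X)$ and prove that irreducibility and non-Euclideanness of the fibre prevent any splitting from mixing fibres, so that splittings correspond precisely to elements of the measure algebra. Making this ``measurable de Rham decomposition'' rigorous, including the measurable selection and the attendant a.e.\ statements, is the technical crux; the remaining steps are either routine computations or standard facts about standard probability spaces.
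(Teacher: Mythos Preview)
Your overall strategy matches the paper's: classify metric-product splittings of $L^2(\Omega,X)$ as the canonical ones $L^2(A,X)\times L^2(A^c,X)$, read off a measure-algebra automorphism realized (since $\Omega$ is standard) by an a.e.\ bijection $\varphi$, and then show the residual isometry is fibrewise and extract the dilations $\rho(\omega)$. The converse direction and the computation of the dilation factor are handled just as you describe.

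The difference, and the genuine gap you yourself flag, is in the splitting classification. You propose to argue via tangent cones as direct integrals and a ``measurable de Rham'' principle driven by irreducibility of $X$, but leave this unproved; it is not clear that irreducibility alone carries this, nor that the tangent-cone picture is rigorous in this generality. The paper takes a different, concrete route: it proves that every Lipschitz affine map $F:L^2(\Omega,X)\to Z$ into a CAT$(0)$ space satisfies $d_Z^2(F(f),F(g))=\int_\Omega\eta\,d_X^2(f,g)\,d\mu$ for some nonnegative $\eta\in L^\infty(\Omega)$ (Theorem~\ref{affine_classical}). This is reduced to finite products $X^n$ (Lemma~\ref{lem_affine}) and ultimately to the two-factor case (Lemma~\ref{orig}), where the key input is a Foertsch--Lytchak lemma embedding the affine image of a rectangle into a normed space so that parallel segments go to parallel segments; the full affine-rigidity hypothesis (not just its corollary that $X$ is irreducible) is then used to pin down the factorwise scalings. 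Applying Theorem~\ref{affine_classical} to the two projections of a splitting forces $\eta=\chi_A$, yielding Proposition~\ref{loc}. A minor further difference downstream: once localization holds, the paper obtains the pointwise identity $d_X(f(\omega),g(\omega))=\delta(\omega)^{1/2}d_X(\gamma(f)(\varphi^{-1}(\omega)),\gamma(g)(\varphi^{-1}(\omega)))$ and defines $\rho(\omega)$ on a countable dense set of constants, so the dilation property is read off directly rather than via a separate ``affine hence dilation'' step on each fibre.
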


This result is analogous to \cite[Theorem E]{isom} for $L^2$-spaces with Riemannian manifold targets, yet a key difference is that the Riemannian case yields a family of isometries, while this setting allows for dilations, reflecting \emph{greater isometric flexibility}. 
 
The proof strategy remains the same as in \cite{isom}: we first \emph{characterize affine maps on metric products of $X$}, and then extend this to $L^2(\Omega,X)$. This is used to establish that any splitting $L^2(\Omega,X)=Y \times \overline Y$ is canonical (i.e.  $L^2(A,X)\times L^2(A^c,X)$ up to an isometry of the factors,
 for some measurable $A\subset \Omega$) which will be the key to analyzing the isometries in the remaining argument.

The main difference from \cite{isom} is in the characterization of affine maps on products (Lemma \ref{orig}, the analogue of \cite[Theorem 3.1]{isom}). Unlike the Riemannian approach in \cite{isom}, which relied on holonomy and Lytchak's work on affine maps on Riemannian manifolds \cite{affineimages}, our proof is \emph{fundamentally metric}, drawing on the work of Foertsch and Lytchak \cite{MR2399098} and CAT$(0)$ geometry.

\section{Preliminaries}

\subsection{Metric spaces} Let $X:=(X,d)$ be a metric space.
A \emph{geodesic} in $X$ is an isometric embedding of an interval into $X$, it is called a segment if the interval is compact. 
The metric space $X$ is a \emph{geodesic metric space} if every pair of points
in $X$ is joined by a geodesic.

A complete geodesic metric space is a CAT$(0)$ space if, for any geodesic triangle, the distance between any two points on its sides is less than or equal to the distance between the corresponding points on the sides of its comparison triangle in the Euclidean plane $\mathbb E^2$. See \cite{BH} for more details. 

A map $f\colon X \to Y$ between metric spaces is \textit{affine} if it maps geodesics to linearly reparametrized geodesics, with a reparametrization factor that may depend on the geodesic. An affine map is called a \textit{dilation} if the reparametrization factors are independent of the geodesics. 

The space of dilations $f:X \to X$ with factor $\lambda\geq 0$  is denoted $\text{Dil}_\lambda(X)$, and the space of all dilations $\text{Dil}(X)=\bigsqcup_{\lambda\geq 0} \text{Dil}_\lambda(X)$.  

Examples of affine maps that are not dilations are projections from metric products onto their factors: by \cite[I.5.3]{BH}, for a geodesic $\gamma=(\gamma_1,\gamma_2)$ in a metric product $X_1\times X_2$, $\gamma_1$ and $\gamma_2$ are linearly reparametrized geodesics in $X_1$ and $X_2$ respectively, showing that the projection maps $X_1\times X_2 \to X_1$ and $X_1\times X_2 \to X_2$ are affine. 
Another class of examples arises from normed vector spaces through a change of norm: linear segments $t\mapsto v+t\cdot w$ are linearly reparametrized geodesics in $(\R^n,|\cdot|)$ for any norm $|\cdot|$, and $(\R^n,|\cdot|)$ is uniquely geodesic if and only if $|\cdot|$ is strictly convex. Thus for a strictly convex norm $|\cdot|$ and any other norm $|\cdot|^\prime$, the identity map $(\R^n,|\cdot|)\to (\R^n, |\cdot|^\prime)$ is affine. As long as $|\cdot|^{\prime}$ is not just a simple multiple of $|\cdot|$, the resulting affine map is not a dilation. 

We call a metric space $X$ \textit{affinely rigid} if all affine maps $f\colon X \to Y$, into any metric space $Y$, are dilations. We say that $X$ is \textit{affinely rigid with respect to CAT$(0)$ spaces} if all affine maps $f\colon X \to Y$, into any CAT$(0)$ space $Y$, are dilations. In the previous project \cite{MR4958894}, the author studied the property of affine rigidity in the context of geodesically complete CAT$(0)$ spaces admitting geometric group actions.

\subsection{The space of positive definite symmetric matrices} Let $P(n)\subset \R^{n\times n}$ be the space of positive-definite symmetric $n\times n$ matrices, which as an open cone in the vector space $S(n)\subset \R^{n\times n}$ of symmetric $n\times n$ matrices naturally carries the structure of a smooth manifold. 

Let $x\in P(n)$, and $a,b\in T_xP(n) \cong S(n)$, then $$g_x(a,b):=\text{tr}(x^{-1}ax^{-1}b),$$ defines a Riemannian metric on $P(n)$, the induced distance of which we denote by $d_{P(n)}$. The Riemannian manifold $(P(n),g)$ is well-studied and has many interesting properties, of which we recall a few: \begin{enumerate}
	\item $(P(n),g)$ is a symmetric space and can be identified with the homogeneous space $GL(n,\R)/O(n)$.
	\item $(P(n),d_{P(n)})$ is a CAT$(0)$ space.
	\item The space $P_1(n)\subset P(n)$ of positive-definite symmetric $n\times n$ matrices with determinant one is a totally geodesic submanifold of $(P(n),g)$, and as such also CAT$(0)$. $P_1(n)$ is also a symmetric space (as the homogeneous space $SL(n,\R)/SO(n)$).
	\item $P(n)$ splits as $\R \times P_1(n)$ via the isometry $(s,p) \mapsto e^{\frac{s}{\sqrt{n}}}p$. 
	\end{enumerate}
We refer to \cite[Chapter II.10]{BH} for more details. 

For the present purpose, we consider another Riemannian metric $g^\prime$ on $P(n)$ obtained via a conformal change of $g$ with factor $x\mapsto \sqrt{\det(x)}$: $$g_x^\prime(a,b)=g_x(a,b)\sqrt{\det(x)}=\text{tr}(x^{-1}ax^{-1}b)\sqrt{\det(x)},$$ where again $x\in P(n)$ and $a,b\in T_xP(n) \cong S(n)$.

In analogy to the product decomposition of $(P(n),g)$, $(P(n),g^\prime)$ is isometric to the \textit{warped product} $(\R \times P_1(n), (e^{\frac{t\sqrt{n}}{2}}dt^2) \times_{e^{\frac{t\sqrt{n}}{4}}} g)$ (cf. \cite{C}). Since $(\R,e^{\frac{t\sqrt{n}}{2}}dt^2)$ is isometric to the open ray $(\R_{>0},dt^2)$ via $t\mapsto \frac{4}{\sqrt{n}}e^{\frac{t\sqrt{n}}{4}}$, the space $(P(n),g^\prime)$ is also expressible as a warped product with linear warping function: $(P(n),g^\prime)\cong (\R_{>0} \times P_1(n), dt^2 \times_{\frac{t\sqrt{n}}{4}} g)$.

The metric $d_{P(n)}^\prime$ induced by $g^\prime$ on $P(n)$ is incomplete, as demonstrated by Cauchy sequences of the form $(\frac{1}{n},x)_{n\in \N}$ for fixed $x\in P_1(n)$. Further, the space is dense in the complete \emph{metric warped product} $(\R_{\geq 0} \times P_1(n), d_{eucl}\times_\frac{t\sqrt{n}}{4} d_{P(n)})$ (for the definition of metric warped products with possibly vanishing warping functions, see \cite{MR4734965}).

 Thus, $(\R_{\geq 0} \times P_1(n), d_{eucl}\times_{\frac{t\sqrt{n}}{4}} d_{P(n)})$ is the completion of $(P(n), d_{P(n)}^\prime)$. Since the warping function is linear, the metric warped product is a  Euclidean cone \cite[Chapter 11.C]{MR4734965}. More specifically: $(\R_{\geq 0} \times P_1(n), d_{eucl}\times_{\frac{t\sqrt{n}}{4}} d_{P(n)}) \cong C_0(P_1(n),\frac{\sqrt{n}}{4}d_{P(n)})$, and so:
\begin{lemma}\label{cone_P}
	  	The metric completion of $(P(n),d_{P(n)}^\prime)$ is isometric to the Euclidean cone $C_0(P_1(n),\frac{\sqrt{n}}{4}d_{P(n)})$. 
\end{lemma}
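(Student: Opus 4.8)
The plan is to assemble the chain of isometries spelled out in the preceding paragraphs into a single identification, the final link being the recognition of a linearly warped product as a Euclidean cone. Throughout I rely on the product splitting $(P(n),g)\cong(\R\times P_1(n),\,ds^2+g|_{P_1(n)})$ furnished by $(s,p)\mapsto e^{s/\sqrt n}p$, which is an isometry for the unmodified metric $g$.

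First I would compute the conformal factor $\sqrt{\det x}$ in these splitting coordinates. For $x=e^{s/\sqrt n}p$ with $p\in P_1(n)$ one has $\det x=(e^{s/\sqrt n})^n\det p=e^{s\sqrt n}$, so $\sqrt{\det x}=e^{s\sqrt n/2}$. Hence $g'=\sqrt{\det x}\,g=e^{s\sqrt n/2}ds^2+e^{s\sqrt n/2}g|_{P_1(n)}$, which is precisely the warped product $(\R,e^{s\sqrt n/2}ds^2)\times_{e^{s\sqrt n/4}}(P_1(n),g)$. Next I would reparametrize the base isometrically by $t=\tfrac{4}{\sqrt n}e^{s\sqrt n/4}$: since $dt=e^{s\sqrt n/4}ds$ this identifies $(\R,e^{s\sqrt n/2}ds^2)$ with the open ray $(\R_{>0},dt^2)$ and turns the warping function into the \emph{linear} function $e^{s\sqrt n/4}=\tfrac{\sqrt n}{4}t$. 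As all these maps are Riemannian, hence metric, isometries, they identify $(P(n),d_{P(n)}^\prime)$ with the Riemannian warped product $(\R_{>0}\times P_1(n),\,dt^2\times_{\frac{\sqrt n}{4}t}g)$, and therefore identify their metric completions.

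It then remains to identify this completion. The base $\R_{>0}$ is an open ray, so the space is incomplete; adjoining the missing level $t=0$, where the linear warping function vanishes and collapses the fibre to the apex, yields the complete metric warped product $(\R_{\geq0}\times P_1(n),\,d_{eucl}\times_{\frac{\sqrt n}{4}t}d_{P(n)})$, inside which the original space sits densely. Completeness of the enlarged space follows from completeness of $\R_{\geq0}$ and of $P_1(n)$, the latter being CAT$(0)$. Finally, because the warping function $\tfrac{\sqrt n}{4}t$ is linear, \cite[Chapter 11.C]{MR4734965} identifies this metric warped product with the Euclidean cone $C_0(P_1(n),\tfrac{\sqrt n}{4}d_{P(n)})$, the rescaling of the fibre metric by $\tfrac{\sqrt n}{4}$ being exactly the slope of the warping function.

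The bookkeeping in the first two steps is routine; the only genuinely delicate point is the passage from the Riemannian warped-product distance on the open cone to the completed metric warped product. One must know that the induced distance $d_{P(n)}^\prime$ agrees with the restriction of the cone distance, so that no shortcut through the apex is lost when taking the infimum over paths, and that adjoining the apex produces a complete space with dense image. This is precisely what the metric-warped-product formalism of \cite{MR4734965} recalled above supplies, so I would invoke it rather than reprove it.
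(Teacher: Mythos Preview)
Your proposal is correct and follows essentially the same approach as the paper: the argument preceding the lemma in the paper carries out exactly this chain of identifications (product splitting, computation of the conformal factor, reparametrization $t=\tfrac{4}{\sqrt n}e^{s\sqrt n/4}$ to obtain a linear warping function, completion via the metric warped product formalism of \cite{MR4734965}, and recognition as a Euclidean cone). Your write-up makes a few of the routine computations more explicit and correctly flags the one non-formal step, namely that the Riemannian distance on the open cone agrees with the restriction of the completed cone distance, which the paper likewise handles by appeal to \cite{MR4734965}.
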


Since it is isometric to a Euclidean cone over a CAT$(0)$ space, the completion of $(P(n),d_{P(n)}^\prime)$ is also a CAT$(0)$ space (cf. \cite[II. 3.14]{BH}).

\subsection{Measure spaces}

We introduce some measure-theoretic concepts underlying our work. We refer to Bogachev \cite[Chapter 9]{MR2267655}. 

Let $(\Omega_1,\mathcal A_1, \mu_1)$ and $(\Omega_2,\mathcal A_2,\mu_2)$ be measure spaces. 
\begin{enumerate}
\item A bijection $\varphi:\Omega_1 \to \Omega_2$ with $A\in \mathcal A_1 \iff \varphi(A) \in \mathcal A_2$ and $\mu_2(\varphi(A))=\mu_1(A)$ for all $A\in \mathcal A_1$ is a \textit{strict isomorphism}.
\item A map $\varphi:\Omega_1 \to \Omega_2$ for which there exist $N_1\in \mathcal A_1$ and $N_2 \in \mathcal A_2$ with $\mu_1(N_1)=\mu_2(N_2)=0$, and such that $\Omega_1\setminus N_1 \to \Omega_2 \setminus N_2$, $\omega \mapsto \varphi(\omega)$ is a strict isomorphism is called an \textit{isomorphism}. An isomorphism $\varphi:\Omega \to \Omega$ is an \textit{automorphism}. 
\end{enumerate}

A map $\varphi:\Omega_1 \to \Omega_2$ for which there exist null sets $N_1\subset \Omega_1$ and $N_2\subset \Omega_2$ such that $\Omega_1\setminus N_1 \to \Omega_2 \setminus N_2$, $\omega \mapsto \varphi(\omega)$ is bijective is called \textit{almost everywhere bijective}.

A measure space $(\Omega,\mu)$ is a \textit{standard probability space} or \textit{Lebesgue-Rokhlin space} if it is isomorphic to $[0,1]\sqcup \N$, equipped with the measure $\nu:=c\lambda+\sum_{n=1}^{\infty}p_n\delta_n$, where $c\geq 0$ and $p_n\geq 0$ for all $n\in \N$. 
Here $\lambda$ denotes the Lebesgue measure on $[0,1]$ and $\delta_{n}$ the Dirac measure at  $n \in \N$.  The atoms are represented by the integers and the space is atomless if $p_n=0$ for all $n\in \N$. Of course we have that $c+\sum_{n=0}^\infty p_n=1$.

\subsection{$L^2$-spaces with metric space targets}

We briefly recall the definition and some properties of $L^2$-spaces with metric space targets. For more details, see e.g., \cite{KS, Monod, isom}.

Let $(\Omega, \mu)$ be a finite measure space, and $(X,d)$ a metric space. The space $L^2(\Omega,X)$ consists of measurable functions $f:\Omega\to X$ (up to null-sets) with separable range that satisfy 
$$\int_\Omega d^2(f(g),x)d\mu(g)<\infty$$
for some (and thus any) $x\in X$. We naturally equip $L^2(\Omega,X)$ with the metric $d_{L^2}(f,f^\prime):=\left(\int_\Omega d^2(f(g),f^\prime(g))d\mu(g)\right)^{\frac{1}{2}}$.

For a fixed target $(X,d)$, the isometry class of $L^2(\Omega,X)$ depends only on the measure space class of $\Omega$. Specifically, if $(\Omega_1, \mu_1)$ and $(\Omega_2, \mu_2)$ are isomorphic measure spaces with isomorphism $\varphi:\Omega_1 \to \Omega_2$, the map $f\mapsto f\circ \varphi$ defines an isometry between $L^2(\Omega_2,X)$ and $L^2(\Omega_1,X)$.

The geodesics in $L^2(\Omega,X)$ are characterized by the following result:
\begin{theorem}(\cite[Proposition 44]{Monod})\label{thm: monodgeodesics_concise}
A continuous map $\sigma:I\to L^2(\Omega,X)$ (where $I\subset \R$ is an interval) is a geodesic if and only if there exists a measurable function $\alpha:\Omega \to \R_{\geq 0}$ such that $\int_\Omega \alpha(\omega)^2 d\mu(\omega)=1$, and a collection of geodesics $\{\sigma^\omega\}_{\omega\in \Omega}$ in $X$ such that
$$\sigma(t)(\omega)=\sigma^\omega(\alpha(\omega)t) \text{ for all } t\in I \text{ and } \mu\text{-almost every } \omega\in \Omega.$$
\end{theorem}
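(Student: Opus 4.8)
The plan is to prove both implications, recalling that in the paper's convention a geodesic is an isometric embedding of an interval, so $\sigma$ is automatically unit speed: $d_{L^2}(\sigma(s),\sigma(t))=|s-t|$. The reverse implication is a direct computation. Given $\alpha$ and the family $\{\sigma^\omega\}$, the fact that each $\sigma^\omega$ is an isometric embedding gives $d(\sigma^\omega(\alpha(\omega)s),\sigma^\omega(\alpha(\omega)t))=\alpha(\omega)|s-t|$ for every $\omega$, whence
\[
d_{L^2}(\sigma(s),\sigma(t))^2=\int_\Omega \alpha(\omega)^2|s-t|^2\,d\mu(\omega)=|s-t|^2,
\]
using $\int_\Omega\alpha^2\,d\mu=1$. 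Thus $\sigma$ is a geodesic, once one checks that $\omega\mapsto\sigma^\omega(\alpha(\omega)t)$ is measurable, has separable range, and is square-integrable.

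For the forward implication the engine is the equality case of Minkowski's inequality. Fix $s<t<u$ in $I$ and write $F(\omega)=d(\sigma(s)(\omega),\sigma(t)(\omega))$, $G(\omega)=d(\sigma(t)(\omega),\sigma(u)(\omega))$, and $H(\omega)=d(\sigma(s)(\omega),\sigma(u)(\omega))$. Pointwise $H\le F+G$, and the $L^2$-norms satisfy $\|H\|_2=u-s=(t-s)+(u-t)=\|F\|_2+\|G\|_2$ by the geodesic equation, while monotonicity of the norm and Minkowski give $\|H\|_2\le\|F+G\|_2\le\|F\|_2+\|G\|_2$. Hence all these are equalities. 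Equality $\|H\|_2=\|F+G\|_2$ with $H\le F+G$ forces $H=F+G$ a.e. (pointwise additivity of distance along the $\omega$-fibre), and equality in Minkowski forces $F$ and $G$ to be a.e. non-negatively proportional, the ratio being $\|G\|_2/\|F\|_2=(u-t)/(t-s)$. Thus $d(\sigma(t)(\omega),\sigma(u)(\omega))=\tfrac{u-t}{t-s}\,d(\sigma(s)(\omega),\sigma(t)(\omega))$ for a.e.\ $\omega$.

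I would then fix a countable dense set $D\subset I$ together with reference points $s_0<t_0$ in $D$, and set $\alpha(\omega):=d(\sigma(s_0)(\omega),\sigma(t_0)(\omega))/(t_0-s_0)$. Intersecting the countably many null sets coming from triples in $D$ yields a full-measure set $\Omega_0$ on which the proportionality and additivity relations combine to give $d(\sigma(s)(\omega),\sigma(t)(\omega))=\alpha(\omega)|s-t|$ for all $s,t\in D$; the normalization $\int_\Omega\alpha^2\,d\mu=d_{L^2}(\sigma(s_0),\sigma(t_0))^2/(t_0-s_0)^2=1$ is then immediate. For $\omega\in\Omega_0$ the map $t\mapsto\sigma(t)(\omega)$ is a constant-speed geodesic on $D$, which extends uniquely to a geodesic $\sigma^\omega$ on $\alpha(\omega)\cdot I$ with $\sigma^\omega(\alpha(\omega)t)=\sigma(t)(\omega)$ for $t\in D$ (completeness of the target, e.g.\ CAT$(0)$ in the intended application, guarantees the extension). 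Finally $\tilde\sigma(t)(\omega):=\sigma^\omega(\alpha(\omega)t)$ is a geodesic of $L^2(\Omega,X)$ by the reverse implication and coincides with $\sigma$ on the dense set $D$, so continuity forces $\tilde\sigma=\sigma$ on all of $I$, giving $\sigma(t)(\omega)=\sigma^\omega(\alpha(\omega)t)$ for every $t\in I$ and $\mu$-a.e.\ $\omega$.

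The main obstacle is the forward direction's first step: extracting both pointwise distance-additivity and fibrewise proportionality of speeds from the single scalar identity $d_{L^2}(\sigma(s),\sigma(u))=d_{L^2}(\sigma(s),\sigma(t))+d_{L^2}(\sigma(t),\sigma(u))$ via the equality case of Minkowski, and then organizing these a priori per-triple a.e.\ statements into one full-measure set carrying a genuine constant-speed geodesic structure. The remaining difficulties are bookkeeping: the measurability and essential separability of $\omega\mapsto\sigma^\omega(\alpha(\omega)t)$, handled by a.e.\ approximation along $D$, and the uniform-in-$t$ passage from the dense set $D$ to the whole interval, cleanly resolved by identifying $\tilde\sigma$ with $\sigma$ through continuity rather than by controlling a null set for each of the uncountably many $t\in I$.
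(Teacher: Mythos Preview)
The paper does not prove this theorem; it is quoted verbatim from \cite[Proposition 44]{Monod} in the preliminaries without argument, so there is no in-paper proof to compare against.

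Your approach via the equality case of Minkowski's inequality is the standard one and is essentially correct. From $\|H\|_2=\|F\|_2+\|G\|_2$ together with the pointwise triangle inequality $H\le F+G$ you correctly extract both $H=F+G$ a.e.\ (pointwise additivity along each fibre) and the proportionality $G=\tfrac{u-t}{t-s}F$ a.e., and the bookkeeping over a countable dense $D\subset I$ to manufacture a single full-measure set $\Omega_0$ carrying the constant-speed relation is the right way to globalize these per-triple statements. The final trick of defining $\tilde\sigma$ from the fibrewise geodesics and then identifying $\tilde\sigma=\sigma$ on all of $I$ by continuity, rather than chasing a null set for each $t$, is clean and avoids the usual quantifier headache. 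Two minor points: the completeness hypothesis you invoke for the extension from $D$ to $I$ is not part of the statement as quoted, but is harmless in every application in the paper (the target is always CAT$(0)$); and the degenerate case $\alpha(\omega)=0$ deserves a word, since a constant map is not literally an isometric embedding of a nondegenerate interval, though one may simply take $\sigma^\omega$ to be any geodesic through the constant value and the formula $\sigma^\omega(\alpha(\omega)t)=\sigma^\omega(0)$ is insensitive to the choice.
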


Finally we record that $L^2$-spaces inherit many properties from their targets.
If $(\Omega, \mu)$ is a finite measure space, then:
\begin{enumerate}
    \item If $(X,d)$ is complete, $L^2(\Omega,X)$ is also complete.
    \item If $(X,d)$ is geodesic, then $L^2(\Omega,X)$ is also geodesic. 
    \item If $(X,d)$ is a CAT$(0)$ space, $L^2(\Omega,X)$ is also a CAT$(0)$ space. \end{enumerate}
 
\section{Affine maps on cones} \label{cone}

We begin by recording the following auxiliary result, which is a direct  corollary of \cite[Proposition 3.3]{MR2399098}. 

\begin{lemma}\label{affine_rectangle}
	Let $I,J\subset \R$ be two closed intervals, $Z$ a geodesic metric space and $f:I\times J \to Z$ an affine map. Parallel lines in $I\times J$ are reparametrized uniformly under $f$: let $a,b\in \R$ and $(t_1,t_2)\in I\times J$ -- the speed of $s\mapsto f(as+t_1,bs+t_2)$ is independent of $(t_1,t_2)\in I\times J$.
\end{lemma}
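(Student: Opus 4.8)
The plan is to reduce the statement about uniform reparametrization of \emph{all} parallel lines in the rectangle $I\times J$ to the cited result of Foertsch--Lytchak, which (in the relevant special case) controls the speed of an affine map along a single family of parallel geodesics. First I would set up the notation carefully: for an affine map $f$ and a geodesic $\gamma$ in $I\times J$, write $\operatorname{sp}_f(\gamma)$ for the constant reparametrization speed of $f\circ\gamma$ (this is well defined because $f$ is affine, so $f\circ\gamma$ is a geodesic traversed at constant speed). The goal is to show that for fixed direction $(a,b)\in\R^2$, the value $\operatorname{sp}_f\big(s\mapsto(as+t_1,bs+t_2)\big)$ does not depend on the base point $(t_1,t_2)$, as long as the line stays inside the rectangle.

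The key step is to exploit that $I\times J$ is a Euclidean rectangle, so it is flat and any two parallel segments $\gamma_0,\gamma_1$ in the same direction bound a flat parallelogram (in fact a thin strip foliated by parallel segments). The natural tool is \cite[Proposition 3.3]{MR2399098}, which I expect asserts that an affine map on a product (or on a flat piece) is ``affine in each factor with factor-independent speeds'' — concretely, that the restriction of $f$ to parallel fibres is reparametrized by the same constant. I would first handle the two coordinate directions, $(a,b)=(1,0)$ and $(a,b)=(0,1)$, directly: these are exactly the fibre directions of the product $I\times J$, so Proposition 3.3 gives that $s\mapsto f(s+t_1,t_2)$ has speed independent of $t_2$ and $s\mapsto f(t_1,s+t_2)$ has speed independent of $t_1$. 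Combined with the (one-variable) affinity along each fibre, this pins down the horizontal and vertical speeds as genuine constants on the whole rectangle.

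For a general direction $(a,b)$ the plan is to interpolate. Fix the direction and consider the two parallel segments through base points $(t_1,t_2)$ and $(t_1',t_2')$; I want to show their $f$-speeds agree. The clean way is to connect the two segments by sliding along a coordinate direction and use that $f$ restricted to any sub-rectangle is again affine, so the same Proposition 3.3 applies to the $1$-parameter family of parallel diagonal segments obtained by translating in, say, the first coordinate. Because the speed of $f$ along a diagonal segment is a continuous function of the basepoint (affinity forces $f\circ\gamma$ to be a constant-speed geodesic, and endpoints vary continuously, so by the triangle inequality the length is continuous in the basepoint), and Proposition 3.3 forces it to be locally constant along each translation family, continuity upgrades ``locally constant'' to ``constant'' on the connected rectangle. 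Thus $\operatorname{sp}_f$ is constant over all base points for the fixed direction, which is the claim.

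The main obstacle is matching the precise hypotheses and conclusion of \cite[Proposition 3.3]{MR2399098} to what is needed here: that proposition is presumably stated for affine maps out of a metric \emph{product} and guarantees factor-wise uniform speed, whereas a general diagonal direction in $I\times J$ is not a product fibre. The essential move is therefore to re-coordinatize: for a fixed direction $(a,b)$, decompose the rectangle (or a sub-rectangle) as a product of the line $\R(a,b)$ with a complementary direction, and observe that the Euclidean product structure of $I\times J$ is compatible with this new splitting, so that the diagonal segments become fibres of a product decomposition to which Proposition 3.3 directly applies. Verifying that this re-coordinatization stays within the scope of the cited result — i.e. that the flat rectangle genuinely splits as a metric product in the rotated frame, and that $f$ remains affine on it — is where the real care is needed; once that is in place, the uniform-speed conclusion is immediate.
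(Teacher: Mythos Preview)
Your proposal rests on a mistaken guess about what \cite[Proposition 3.3]{MR2399098} actually says. It is \emph{not} a statement about factor-wise uniform speeds; rather, it asserts that the image $f(I\times J)$ admits an isometric embedding into a normed vector space $V$ such that the composite $I\times J \to f(I\times J)\hookrightarrow V$ sends geodesics to \emph{linear} segments. In other words, after post-composing with an isometry, $f$ becomes (the restriction of) a classical affine map $F:\R^2\to V$. The paper's proof is then a single sentence: classical affine maps take parallel lines to parallel lines (of equal length ratio), so the reparametrization factor depends only on the direction $(a,b)$ and not on the basepoint $(t_1,t_2)$.

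Your alternative route---handle coordinate directions first, then rotate the product splitting so a given diagonal direction becomes a fibre, then patch via continuity---is not wrong in spirit, but it is both unnecessary and incomplete as written. The re-coordinatization step is delicate: after rotating, the original rectangle is no longer a product of intervals in the new frame, so you must pass to small sub-rectangles aligned with the rotated axes, obtain local constancy there, and then invoke continuity of the speed function to globalize. You assert this continuity but do not justify it (it does follow, since affinity on a convex Euclidean domain forces $f$ to be Lipschitz once the speeds in two independent directions are bounded, but this needs an argument). More to the point, all of this scaffolding evaporates once you use the actual content of the cited proposition: the embedding into a normed space handles \emph{all} directions simultaneously and reduces the lemma to elementary linear algebra.
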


\begin{proof}
	By \cite[Proposition 3.3]{MR2399098}, there exists an isometric embedding $f(I\times J) \hookrightarrow V$ into a normed space $V$ such that the affine map $I\times J\xrightarrow{(t,s)\mapsto f(\gamma(t),\eta(s))} f(I\times J) \hookrightarrow V$ sends geodesics in $I\times J$ to linear geodesics in $V$. As described in \cite{MR2399098}, this map can be extended to an affine map $F:\R^2 \to V$ in the classical sense, the restriction of which to $I\times J$ coincides with the above map $I\times J \to V$. Since the classical affine map $F$ sends parallel lines in $\R^2$ to parallel lines in $V$, we deduce that the reparametrization factors are independent as claimed.
\end{proof}

\begin{figure}[h]
    \centering
    \begin{tikzpicture}[scale=1, thick]

        \def\rectwidth{3.0}
        \def\rectheight{1.5}
        \def\shearfactor{1.0}
        \def\xshift{5.5} %
        
        \coordinate (A) at (0, 0);
        \coordinate (B) at (0, \rectheight);
        \coordinate (C) at (\rectwidth, \rectheight);
        \coordinate (D) at (\rectwidth, 0);
         
         \draw[draw=blue, line width=1pt] (A) -- (B) -- (C) -- (D) -- cycle;
        \node at (\rectwidth/2, -0.4) {$I\times J$}; %
        
        \coordinate (A_prime) at (\xshift + 0 + \shearfactor * 0, 0);
        \coordinate (B_prime) at (\xshift + 0 + \shearfactor * \rectheight, \rectheight);
        \coordinate (C_prime) at (\xshift + \rectwidth + \shearfactor * \rectheight, \rectheight);
        \coordinate (D_prime) at (\xshift + \rectwidth + \shearfactor * 0, 0);
    
        \draw[draw=blue, line width=1pt] (A_prime) -- (B_prime) -- (C_prime) -- (D_prime) -- cycle;
        \node at (\xshift + \rectwidth/2 + \shearfactor * \rectheight/2, -0.4) {$f(I\times J)\subset Z$}; 
        
        \coordinate (Start) at (\rectwidth + 0.5, \rectheight/2);
        \coordinate (End) at (\xshift - 0.25, \rectheight/2);
        
        \draw[->, black] (Start) to[out=20, in=160] (End);
      
       \node at (4.35,0) [yshift=0.5cm] {$f$};

   \draw[draw=teal, line width=2pt] (0.5, 0.2) -- (1, 1);
   \draw[draw=teal, line width=2pt] (1.2, 0.3) -- (1.7, 1.1);
   
   \draw[draw=teal, line width=2pt] (6.1, 0.2) -- (7.2, 0.7); 
   \draw[draw=teal, line width=2pt] (7.2, 0.3) -- (8.3, 0.8);

    \end{tikzpicture}
    \caption{\small The rectangle $I\times J$ is send to a (possibly degenerate) parallelogram. Parallel segments in $I\times J$ are send to parallel ones in $f(I\times J)$. }
\end{figure}
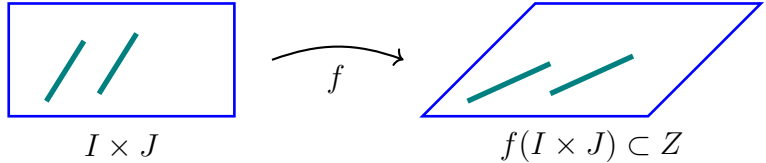

With this, we show that cones over geodesically complete CAT$(0)$ spaces are affinely rigid.

	\begin{theorem}\label{cone_prop_a}
		Let $X$ be a geodesically complete CAT$(0)$ space. Then the Euclidean cone $C_0(X)$ is affinely rigid. 
	\end{theorem}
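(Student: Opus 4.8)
The plan is to prove that an arbitrary affine map $f\colon C_0(X)\to Y$ into a metric space $Y$ scales every geodesic by one common factor $\lambda_0\ge 0$, which is precisely the statement that $f$ is a dilation. Write $o$ for the apex and $\rho_x(r)=(r,x)$, $r\ge 0$, for the radial ray over $x\in X$, so that the cone distance is $d((r,x),(s,y))^2=r^2+s^2-2rs\cos(\min(d_X(x,y),\pi))$. Since $X$ is CAT$(0)$ it is a fortiori CAT$(1)$, so $C_0(X)$ is CAT$(0)$ by \cite[II.3.14]{BH}, in particular uniquely geodesic; and if $X$ is a single point then $C_0(X)=\R_{\ge 0}$, whose geodesics form a single nested family and which is therefore trivially affinely rigid. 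We may thus assume $X$ has at least two points, whence, being geodesically complete, it is unbounded. To each geodesic $\gamma$ the affine map $f$ assigns a well-defined scaling factor $\lambda(\gamma)\ge 0$ (the constant speed of $f\circ\gamma$ for unit-speed $\gamma$), shared by all subsegments of $\gamma$; the proof proceeds in two steps: first that all radial rays share a common factor, then that this factor governs every geodesic. When invoking Lemma~\ref{affine_rectangle} we may, since $f$ sends geodesics to geodesics, replace $Y$ by the geodesic subspace generated by the image of $f$.

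For the first step I would exploit geodesics through the apex. Whenever $d_X(x,y)\ge\pi$ the law of cosines gives $d((r,x),(s,y))=r+s=d((r,x),o)+d(o,(s,y))$, so $\rho_x\cup\rho_y$ is a geodesic line; as $f$ reparametrizes this line linearly, its two radial halves must be scaled equally, i.e. $\lambda(\rho_x)=\lambda(\rho_y)$. To upgrade this to all pairs, given arbitrary $x,y$ I would run a geodesic ray out of $x$ and pick a point $z$ far along it, so that $d_X(x,z)\ge\pi$ and $d_X(y,z)\ge d_X(x,z)-d_X(x,y)\ge\pi$; then $\lambda(\rho_x)=\lambda(\rho_z)=\lambda(\rho_y)$. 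This yields a common value $\lambda_0:=\lambda(\rho_x)$ for all radial rays.

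For the second step, let $\gamma$ be a geodesic not through $o$, with endpoints over $x_1,x_2$. The cone law of cosines forces $d_X(x_1,x_2)<\pi$ (equality or more would push $\gamma$ through the apex), so by geodesic completeness I extend the base segment to a geodesic $c\colon[0,\pi]\to X$ of length $\pi$ containing $[x_1,x_2]$. The subcone $H:=C_0(c([0,\pi]))$ is then a convex, isometrically embedded flat half-plane (\cite[II.3]{BH}; convexity of cones over convex sets of diameter $\le\pi$), containing $\gamma$, on which $f$ restricts to an affine map. Fixing any rectangle $R_0$ in the interior of $H$ and applying Lemma~\ref{affine_rectangle} (via \cite{MR2399098}) yields an isometric embedding of $f(R_0)$ into a normed space and a classical affine extension whose linear part $A_0$ defines a seminorm $N(u):=\lVert A_0u\rVert$ on $\R^2$; because parallel segments of $H$ are scaled uniformly, $N(u)$ equals the $f$-scaling factor of every segment of $H$ in direction $u$. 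The radial rays $\rho_{c(s)}$ realize all directions $u_\theta$, $\theta\in[0,\pi]$, of the closed half-circle and each is scaled by $\lambda_0$, so $N(u_\theta)=\lambda_0$ there (using continuity of $N$ at the two boundary directions). Since a seminorm is even, $N=\lambda_0\lvert\cdot\rvert$ on the whole circle, hence on all of $\R^2$; in particular $\gamma$ is scaled by $\lambda_0$. Geodesics through $o$ are unions of two radial rays and are scaled by $\lambda_0$ by the first step, so every geodesic is scaled by $\lambda_0$ and $f$ is a dilation.

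The main obstacle is the second step. The rectangle lemma only compares parallel lines, so it cannot directly relate the transversal chord $\gamma$ to the radial rays; the essential device is to enlarge the flat sector over $[x_1,x_2]$ into a full half-plane, which supplies an entire half-circle of radial directions all scaled by $\lambda_0$ and, together with the evenness of the induced seminorm, pins $N$ down in every direction. The technical points to be careful about are the convexity and flatness of the half-plane $H$ inside $C_0(X)$, the continuity of $N$ at the two boundary directions $\theta=0,\pi$, and the reduction to a geodesic target when applying Lemma~\ref{affine_rectangle}.
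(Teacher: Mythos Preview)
Your proposal is correct and follows the same approach as the paper: first pin down a common radial scaling factor $\lambda_0$ via geodesic lines through the apex (linking any two rays through a far point $z$), then embed any remaining geodesic into the flat half-plane subcone over a length-$\pi$ arc of $X$ and apply Lemma~\ref{affine_rectangle}. The only cosmetic difference is that the paper dispatches the second step by directly translating the given segment to pass through the origin of the half-plane---where it becomes a radial segment and hence is scaled by $\lambda_0$---whereas you package the same observation as the induced seminorm being $\lambda_0$ on a half-circle of directions and hence, by evenness, everywhere; this also makes your worries about continuity at the boundary directions unnecessary.
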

	
	  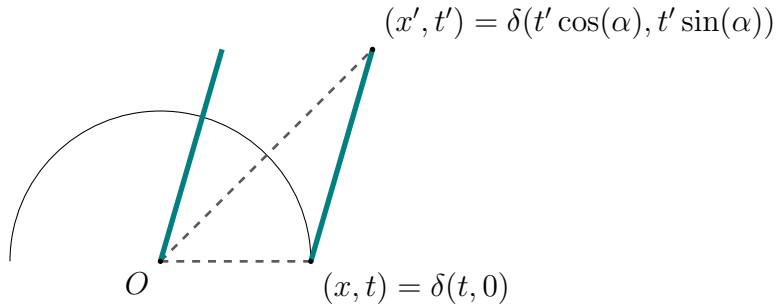
\begin{figure}[b]\label{cone_figure}
\begin{center}
\begin{tikzpicture}[scale=2] 

\pgfmathsetmacro{\R}{1} 
\pgfmathsetmacro{\Angle}{45}

\coordinate (X) at (\R, 0);

\coordinate (Y) at (1.41,1.41);

\coordinate (O) at (0, 0);

\draw[black] (\R, 0) arc (0:180:\R);

\draw[dashed, line width=1pt, gray!70!black] (O) -- (X); 
\draw[dashed, line width=1pt, gray!70!black] (O) -- (Y); 

\draw[teal, line width=2pt] (X) -- (Y);

\coordinate (Z) at ($(Y) - (X)$);

\draw[teal, line width=2pt] (O) -- (Z);

\fill (O) circle (0.5pt) node[below left] {$O$};

\fill (X) circle (0.5pt) node[below right] {$(x,t)=\delta(t,0)$};
\node at (1.1*\R, -0.1*\R) {}; 

\fill (Y) circle (0.5pt) node[above right] {$(x^\prime,t^\prime)=\delta(t^\prime \cos(\alpha),t^\prime \sin(\alpha))$};

\end{tikzpicture}
\caption{\small  Subcone $C_0(\eta([0,\pi]))$ isometric to the half plane $P_+\subset \mathbb E^2$. The black half-circle represents the segment $\eta_{[0,\pi]}$, the two parallel green segments are reparametrized uniformly under the affine map; compare Lemma~\ref{affine_rectangle} and Figure 3.}
\end{center}
\end{figure}

	\begin{proof}
		Let $f:C_0(X)\to Y$ be an affine map into some metric space $(Y,d_Y)$. We want to show that there exists $\lambda\geq 0$ such that for all $(t,x),(t^\prime,x^\prime)\in C_0(X)$, $$d_Y(f(t,x),f(t^\prime,x^\prime))=\lambda d_{C_0(X)}((t,x),(t^\prime,x^\prime)).$$ For any $x\in X$, $\gamma_x:[0,\infty)\to C_0(X)$, $t \mapsto (t,x)$ is a geodesic ray. 
Given $x,z \in X$ with $d(x,z)\geq \pi$, we notice that $\gamma_{xz}~:~\R\to C_0(X)$, defined by \[\gamma_{xz}(t):=\begin{cases} (-t,x) \text{ for } t\in (-\infty,0) \\(t,z) \text{ for } t\in [0,\infty)  
		
	\end{cases}\]
is a geodesic line. 

As a consequence of the geodesic completeness, given any two points $x,y\in X$, there exists $z\in X$ such that $d(x,z),d(y,z) \geq \pi$. Since the geodesics $\gamma_{xz}$ and $\gamma_{yz}$ both contain the geodesic $\gamma_z$, we conclude that $$\rho(\gamma_{xz})=\rho(\gamma_{yz})=\rho(\gamma_z)=:\lambda,$$ where  $\rho(\gamma)$ denotes the reparametrization factor of $\gamma$ under the affine map $f$.  Therefore we can deduce that for any $x\in X$ and $t,t^\prime\in [0,\infty)$, \begin{align*}\label{lambda}
		d_Y(f(t,x),f(t^\prime,x))=\lambda |t-t^\prime| =\lambda d_{C_0(X)}((t,x),(t^\prime,x)),
     \end{align*} and of course for any $x,x^\prime \in X$ with $d_X(x,x^\prime)\geq \pi$, $$d_Y(f(t,x),f(t^\prime,x^\prime))=\lambda (t+t^\prime)=\lambda d_{C_0(X)}((t,x),(t^\prime,x^\prime)). $$ Now assume we are given $x,x^\prime$ with $d_X(x,x^\prime) < \pi$ and any $t,t^\prime \geq 0$. There exists a geodesic connecting $x$ and $x^\prime$; by geodesic completeness it can be extended to a geodesic line $\eta:\R \rightarrow X$. Possibly after a linear reparametrization, we may assume that $\eta(0)=x$ and $\eta(d(x,x^\prime))=x^\prime$. Now let us consider the conical segment $C_0(\eta([0,\pi]))\subset C_0(X)$ over $\eta\vert_{[0,\pi]}$, and note that it is isometric to the upper half plane $P_+\subset \R^2$ via $$\delta:P_+ \rightarrow C_0(\eta([0,\pi])),(t\cos(\alpha),t\sin(\alpha)) \mapsto (t,\eta(\alpha)).$$ Thus we obtain an affine map $P_+ \xrightarrow{\delta}C_0(\eta([0,\pi]))\xrightarrow{f} f(C_0(\eta([0,\pi])))\subset Y$ from $P_+$ into $Y$. By Lemma~\ref{affine_rectangle}, the segment connecting $(t,0)\in P_+$ to $(t^\prime \cos(\alpha),t^\prime \sin(\alpha)) \in P_+$, where $\alpha=d(x,x^\prime)$, is reparametrized under this affine map by the same factor as the parallel segment issuing in the origin (see Figure~4 above). Since all geodesics issuing in the tip of the cone are reparametrized by $\lambda$, we have shown that indeed $d_Y(f(x,t),f(x^\prime,t^\prime))=\lambda d_{C_0(X)}((t,x),(t^\prime,x^\prime))$, completing the proof. 
 \end{proof}

\section{Affine maps on products}\label{affine_products}

In this section, we show that affine maps from metric products of $\text{CAT}(0)$ spaces that are affinely rigid and not isometric to $\mathbb{R}$ all arise from factor-wise reparametrizations and projections. More precisely:

\begin{lemma}\label{orig}
Let $X$ and $Y$ be geodesically complete CAT$(0)$ spaces, both affinely rigid with respect to CAT$(0)$ spaces, and let $Y$ be non-isometric to $(\mathbb{R},\|\cdot\|_2)$. Then, for any CAT$(0)$ space $Z$, every affine map $f: X \times Y \to Z$ is an isometry from $(X, \lambda d_X) \times (Y, \mu d_Y)$ to $f(X \times Y)$, for some $\lambda, \mu \geq 0$.
\end{lemma}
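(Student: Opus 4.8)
The plan is to peel off the two factors one at a time and then show that the two directions are reparametrized orthogonally, the only obstruction being a possible \emph{shear} which the hypotheses on $Y$ are designed to exclude. First I would restrict $f$ to slices. For fixed $y\in Y$ the subset $X\times\{y\}$ is convex in $X\times Y$ and its geodesics are exactly the product geodesics $(\gamma_1,\mathrm{const})$, so $x\mapsto f(x,y)$ is affine from $X$ into the CAT$(0)$ space $Z$. Affine rigidity of $X$ with respect to CAT$(0)$ spaces forces this restriction to be a dilation, with factor $\lambda(y)\geq 0$; symmetrically $y\mapsto f(x,y)$ is a dilation with factor $\mu(x)\geq 0$. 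Since $Z$ is uniquely geodesic, each slice image $f(X\times\{y\})$ and $f(\{x\}\times Y)$ is a convex, hence CAT$(0)$, subset of $Z$.

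Next I would make the factors constant. Given unit-speed geodesics $\gamma_1$ in $X$ and $\gamma_2$ in $Y$, the product $\gamma_1\times\gamma_2$ is an isometrically embedded Euclidean rectangle $I\times J$, and $f$ restricted to it is affine. By Lemma~\ref{affine_rectangle} parallel lines of $I\times J$ are reparametrized by a common factor; applied to the lines parallel to the $X$-axis this shows that $\lambda(\gamma_2(t))$ is independent of $t$, so $\lambda$ is constant along every geodesic of $Y$, and likewise $\mu$ is constant along every geodesic of $X$. Since CAT$(0)$ spaces are geodesic, $\lambda$ and $\mu$ are global constants. It then remains to establish the Pythagorean identity $d_Z(f(x,y),f(x',y'))^2=\lambda^2 d_X(x,x')^2+\mu^2 d_Y(y,y')^2$.

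To analyze a single rectangle I would reuse the mechanism behind Lemma~\ref{affine_rectangle}: extending $\gamma_1,\gamma_2$ to lines by geodesic completeness, the flat $\gamma_1\times\gamma_2\cong\R^2$ is sent, after an isometric embedding of its image into a normed space, to a classical affine map $(s,t)\mapsto p+su+tv$. Its image is convex in $Z$ (the $Z$-geodesic between two image points is the image of the straight segment, by uniqueness of geodesics in CAT$(0)$ spaces), hence CAT$(0)$; but a normed plane that is CAT$(0)$ must be Euclidean, so this image flat $W$ is a genuine Euclidean plane with $\|u\|=\lambda$ and $\|v\|=\mu$. Thus on each flat $f$ is determined up to the single angle $\theta=\angle(u,v)$, and the identity above is equivalent to the assertion that $\theta=\pi/2$ on every flat.

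The main obstacle is precisely excluding the shear $\theta\neq\pi/2$, and this is where the hypotheses on $Y$ enter. Fixing a line $\gamma_1$ in $X$, the slices $f(\{x\}\times Y)$ for $x\in\gamma_1$ are parallel convex copies of $(Y,\mu d_Y)$, since $d_Z(f(x,y),f(x',y))=\lambda\,d_X(x,x')$ for every $y$. By the Flat Strip and Sandwich theorems for CAT$(0)$ spaces, comparing this vertical correspondence $f(x,y)\leftrightarrow f(x',y)$ with the nearest-point projection between slices produces an isometry of $Y$ whose displacement records $\cos\theta$; letting $x'$ vary along $\gamma_1$ assembles these into a one-parameter family of isometries of $Y$ with uniformly bounded displacement, that is, a nontrivial Clifford translation whenever $\theta\neq\pi/2$. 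A geodesically complete CAT$(0)$ space admitting a nontrivial Clifford translation splits off a Euclidean line; but affine rigidity forbids a factor isometric to $\R$ (the projection onto such a factor is affine yet not a dilation), and $Y\not\cong\R$, so no such translation exists. Hence $\theta=\pi/2$ on every flat, the Pythagorean identity holds, and $f$ is an isometry from $(X,\lambda d_X)\times(Y,\mu d_Y)$ onto $f(X\times Y)$. I expect this absence-of-shear step to be the technical heart; I would close by noting that the hypothesis $Y\not\cong\R$ is sharp, since for $Y=\R$ the change-of-norm maps described in the Preliminaries are affine shears that are not product isometries.
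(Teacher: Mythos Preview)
Your argument through constant dilation factors matches the paper, and the Clifford-translation strategy for excluding the shear is viable and genuinely different from the paper's route. However, the step you flag as the technical heart has a real gap: you invoke the Sandwich Lemma to conclude that the nearest-point projection between the slices $A=f(\{x\}\times Y)$ and $A'=f(\{x'\}\times Y)$ is an isometry, but its hypothesis is that $d_Z(\cdot,A)\vert_{A'}$ be \emph{constant}, whereas you have only established that the correspondence distance $d_Z(f(x',y),f(x,y))=\lambda\,d_X(x,x')$ is constant in $y$. These are not the same; constancy of the set-distance is essentially the no-shear statement itself. The missing idea is that $y\mapsto d_Z(f(x',y),A)$ is convex (distance to a convex set in CAT$(0)$, precomposed with the dilation $y\mapsto f(x',y)$) and bounded above by $\lambda\,d_X(x,x')$, hence constant on the geodesically complete space $Y$. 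With this in hand the Sandwich Lemma does apply, and the resulting self-map of $Y$ has \emph{constant} (not merely bounded) displacement $\sqrt{\lambda^2 d_X(x,x')^2-c^2}$, where $c$ is that constant set-distance; from there your Clifford-translation/splitting argument goes through. You should also insert the reduction to $f$ injective via $f'(x,y)=(f(x,y),x,y)$, without which your ``image flat is a Euclidean plane'' step is not justified.

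For comparison, the paper avoids slice-to-slice projections entirely. After reducing to $X=\R$ and $f$ injective, the convex image $C=f(\R\times Y)$ is foliated by the parallel lines $c_y(t)=f(t,y)$; by \cite[II.2.14]{BH} the parallel set of $c_{y_0}$ splits as $\R\times C^0$, and one simply observes that the composition $\alpha\colon Y\xrightarrow{\,y\mapsto f(0,y)\,} C\to\R$ (projection to the $\R$-factor) is affine. Affine rigidity of $Y$ makes $\alpha$ a dilation into $\R$; a positive factor would force the geodesically complete $Y$ to be isometric to $\R$, so $\alpha$ is constant and the shear vanishes. This is shorter: it spends the hypotheses on $Y$ in a single stroke and never needs the Clifford-translation or splitting machinery.
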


For the proof of this, we again make use of Lemma~\ref{affine_rectangle}.

\begin{proof}[Proof of Lemma~\ref{orig}]
	First notice that for any $y\in Y$, the map $(X,d_X)\to (Z,d_Z)$, $x\mapsto (x,y) \mapsto f(x,y)$ is affine. Thus, since $X$ is affinely rigid with respect to CAT$(0)$ spaces, the map is a dilation with uniform reparametrization factor $\lambda_y\geq 0$, i.e. for all $x,x^\prime \in X$, $$d_Z(f(x,y),f(x^\prime,y))=\lambda_y d_X(x,x^\prime).$$ Similarly, for every $x\in X$, there exists $\mu_x\geq 0$ so that for all $y,y^\prime \in Y$, $$d_Z(f(x,y),f(x,y^\prime))=\mu_x d_Y(y,y^\prime).$$
Take $(x_1,y_1), (x_2,y_2)\in X\times Y$ and geodesics $\gamma:I\to X$, $\eta:J\to Y$ connecting $x_1$ to $x_2$ and $y_1$ to $y_2$ respectively. Of course, $\gamma(I)\times \eta(J)$ is isometric to $I\times J$ via $(t,s) \mapsto (\gamma(t),\eta(s))$, and the map $I\times J \to Z$, $(t,s)\mapsto f(\gamma(t),\eta(s))$ is affine. 
By Lemma~\ref{affine_rectangle}, the parallel sides of $I\times J$ are reparametrized uniformly by this affine map and this gives us that $\lambda_{y_1}=\lambda_{y_2}=:\lambda$, and $\mu_{x_1}=\mu_{x_2}=:\mu$. In other words, we have shown that $f:(X,\lambda d_X)\times (Y,\mu d_Y) \to (Z,d_Z)$ is isometric restricted to each fibre of the product decomposition $X\times Y$.

We want to show that it is actually isometric on the entire product. If $\lambda=0$ or $\mu=0$ we are done. Thus we may assume that $\lambda,\mu >0$, and so by rescaling $\lambda=\mu=1$. Further, we can assume that $f$ is injective, for otherwise we can consider the map $f^\prime:(X,\sqrt{2}d_X)\times (Y,\sqrt{2}d_Y) \to (Z,d_Z)\times (X,d_X) \times (Y,d_Y)$, given by $(x,y) \mapsto (f(x,y),x,y)$, which is injective and still isometric restricted to each fibre. That this map is isometric implies that $f$ is isometric as well. Finally we can simplify to the case $X=\R$. Indeed the assumptions of being affine, injective, and isometric restricted to the fibres of the product decomposition carry over to the maps $\widetilde f_\gamma: \R\times Y \to Z$, $(t,y)\mapsto f(\gamma(t),y)$ for geodesics $\gamma:\R\to X$, and knowing that these maps are isometric for all such geodesics gives us the claim.  

Thus let $f:\R\times Y \to Z$ be an injective and affine map that is isometric restricted to each fibre of the product $\R\times Y$. By the uniqueness of geodesics in CAT$(0)$ spaces, the image $C=f(\R\times Y)$ is convex and thus CAT$(0)$, and the inverse $f^{-1}:C\to \R \times Y$ is affine. 

Define $\delta:=p_\R \circ f^{-1}  : C \to \R$ and $\epsilon:=p_Y \circ f^{-1}: C \to Y$ and observe that $x=f(\delta(x),\epsilon(x))$ for all $x\in C$.  
		
Let $c_y:\R \to C$, $y\in Y$, be given by $c_y(t):=f(t,y)$. For any distinct $y,y^\prime\in Y$, $c_y$ and $c_{y^\prime}$ are disjoint and parallel, and the union of the images of all these geodesic lines $c_y$, $y\in Y$ is equal to $C$. Let us fix some $y_0\in Y$, denote $c:=c_{y_0}$, and let $\pi:C\to c(\R)$ be the nearest point projection of $C$ onto the complete convex image of $c$. Denote $C^0:=\pi^{-1}(c(0))$. Then by \cite[II, 2.14.]{BH}, $C^0$ is convex and $j:\R \times C_0 \to C$, $(t,x) \mapsto f(\delta(x)+t,\epsilon(x))$ is an isometry. Now observe that as a composition of affine maps, the following is affine: $$\alpha: Y\xrightarrow{y\mapsto f(0,y)}C\xrightarrow{p_{1} \circ j^{-1}}  \R,$$ where $p_1$ denotes the projection onto the first factor of $\R \times C^0$. Since $Y$ is affinely rigid with respect to CAT$(0)$ spaces, $\alpha$ is a dilation, i.e. there exists $a\geq 0$ such that $|\alpha(y)-\alpha(y^\prime)|=a\cdot d_Y(y,y^\prime)$ for all $y,y^\prime \in Y$. 
If $a\neq 0$, $Y$ would be isometrically embedded into $\R$, in contradiction to the assumption that $Y$ is geodesically complete and not isometric to $\R$. Thus $a=0$, i.e. $\alpha$ is constant and we conclude that $\alpha(y)=\alpha(y_0)=0$ for all $y\in Y$. 
Thus $C^0=f(\{0\}\times Y)$ which is isometric to $Y$ via $\sigma:Y\to C_0$, defined by $y\mapsto f(0,y)$. Observe that for all $(t,y)\in \R\times Y$, we have $j\circ (\text{id} \times \sigma) (t,y)=f(p_\R\circ f^{-1}(f(0,y))+t,p_Y\circ f^{-1}(f(0,y)))=f(t,y)$, i.e. $j\circ (\text{id} \times \sigma) = f$ and thus we obtain the desired result.
\end{proof}

Applying the same ideas in an inductive argument we obtain: 

\begin{lemma}\label{lem_affine}
	Let $X$ be a geodesically complete CAT$(0)$ space, affinely rigid with respect to CAT$(0)$ spaces, and not isometric to $\R$. For a CAT$(0)$ space $Z$, every affine map $f:X^n\to Z$ is, up to factor-wise rescaling with some $c_i\geq 0$, an isometry $$(X,c_1d_X) \times ... \times (X,c_nd_X)\to f(X^n)\subset Z.$$
\end{lemma}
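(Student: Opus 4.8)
The plan is to argue by induction on $n$, taking Lemma~\ref{orig} as the base case $n=2$ (the case $n=1$ is exactly the hypothesis that $X$ is affinely rigid with respect to CAT$(0)$ spaces). The naive hope of applying Lemma~\ref{orig} directly to the splitting $X^n = X^{n-1}\times X$ fails, because $X^{n-1}$ is \emph{not} affinely rigid — the coordinate projections $X^{n-1}\to X$ are affine but not dilations — so the first factor does not meet the hypotheses of Lemma~\ref{orig}. The observation that unlocks the induction is that affine rigidity of the two factors enters the proof of Lemma~\ref{orig} \emph{only} to produce the fibre-wise isometry statement, whereas the fibre-wise information in the $X^{n-1}$-direction can instead be supplied by the inductive hypothesis.

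Concretely, I would first establish the fibre-wise structure for all $n$ coordinates simultaneously. For each $i$, freezing the remaining coordinates, the restriction $x\mapsto f(\dots,x,\dots)$ is an affine map $X\to Z$, hence a dilation with some factor $c_i$ by the affine rigidity of $X$. Applying Lemma~\ref{affine_rectangle} to the coordinate two-planes spanned by factors $i$ and $j$ (all other coordinates held fixed) shows that lines in the $i$-direction are reparametrized uniformly, so $c_i$ is independent of the $j$-th coordinate for every $j\neq i$, hence a global constant. If some $c_i=0$, then $f$ does not depend on the $i$-th coordinate, so it factors through the projection $X^n\to X^{n-1}$ onto the remaining factors, and the claim follows from the inductive hypothesis applied to the induced affine map. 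Thus we may assume all $c_i>0$ and, after rescaling each factor, that all $c_i=1$.

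It then remains to show that $f$ is an isometry on the whole product, and here I would run the second half of the proof of Lemma~\ref{orig} with the splitting $X^n=A\times B$, $A:=X^{n-1}$ and $B:=X$. The inductive hypothesis, applied to $f(\cdot,b):X^{n-1}\to Z$, gives for each $b$ that this restriction is an isometry onto its image for the factors $c_1,\dots,c_{n-1}$ found above; in the rescaled picture ($c_i=1$) this says $f$ is an honest isometry on each slice $A\times\{b\}$, while Step~1 says $f$ is an isometry on each slice $\{a\}\times B$. Together these are precisely the fibre-wise hypotheses needed to continue. As in Lemma~\ref{orig}, one reduces to the injective case via the embedding $(a,b)\mapsto (f(a,b),a,b)$ into the CAT$(0)$ space $Z\times A\times B$, and then reduces the first factor to $\R$ using geodesic completeness of $A$ (any two points of $A$ lie on a common geodesic line, on which $f$ is already isometric). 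The remaining argument — passing to the convex CAT$(0)$ image $C=f(\R\times B)$, projecting onto one of the parallel lines $c_b$, and showing that the induced affine map $\alpha:B\to\R$ is a dilation, hence constant — uses only that $B=X$ is affinely rigid with respect to CAT$(0)$ spaces, geodesically complete, and not isometric to $\R$, all of which hold. This yields the product decomposition and closes the induction.

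The step I expect to be the main obstacle is the conceptual point of the first paragraph: correctly isolating which parts of the proof of Lemma~\ref{orig} require affine rigidity of which factor, and verifying that the slice-wise isometry on $A\times\{b\}$ furnished by the inductive hypothesis is exactly what is needed to bypass the (false) affine rigidity of $X^{n-1}$. One should also check the bookkeeping in the reduction to $\R$: the geodesic completeness invoked there is that of $A=X^{n-1}$, which holds since a product of geodesically complete spaces is geodesically complete.
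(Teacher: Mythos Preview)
Your proposal is correct and matches the paper's approach, which consists of the single sentence ``Applying the same ideas in an inductive argument we obtain'' preceding the statement. You have correctly fleshed out what that inductive argument must be, including the key observation (which the paper leaves implicit) that $X^{n-1}$ fails to be affinely rigid, so that the fibre-wise isometry on the $X^{n-1}$-slices must come from the inductive hypothesis rather than from a direct appeal to Lemma~\ref{orig}.
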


\section{Isometric localization and rigidity}

As described in the Introduction, the proof of Theorem~\ref{L^2isometry} follows the same line of reasoning as the proof of \cite[Theorem~E]{isom}.

We establish a characterization of affine maps analogous to \cite[Theorem 4.1]{isom}. By applying Lemma~\ref{lem_affine} (replacing \cite[Theorem 3.1]{isom}) and following the corresponding arguments in \cite{isom}, we obtain:

\begin{theorem}\label{affine_classical}
		Let $X$ be a geodesically complete CAT$(0)$ space which is affinely rigid with respect to CAT$(0)$ spaces and  not isometric to $\R$; and let $Y$ be a CAT$(0)$ space.  

		A Lipschitz map $F:L^2(\Omega,X)\to Y$ is affine if and only if there exists a nonnegative $\eta\in L^\infty(\Omega)$ such that, for all $f,g\in L^2(\Omega,X)$,
\[d_Y^2(F(f),F(g))=\int_\Omega\eta \,d_X^2(f,g)\, d\mu.\]\end{theorem}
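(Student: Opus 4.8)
The plan is to prove the two directions separately, with the forward (``only if'') direction carrying essentially all of the work.

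For the ``if'' direction I would start from a geodesic $\sigma:I\to L^2(\Omega,X)$ and apply Theorem~\ref{thm: monodgeodesics_concise} to write $\sigma(t)(\omega)=\sigma^\omega(\alpha(\omega)t)$ for unit-speed geodesics $\sigma^\omega$ in $X$ and a weight $\alpha$ with $\int_\Omega\alpha^2\,d\mu=1$. Substituting into the assumed formula gives
\[
d_Y^2(F(\sigma(s)),F(\sigma(t)))=\int_\Omega\eta(\omega)\,d_X^2\bigl(\sigma^\omega(\alpha(\omega)s),\sigma^\omega(\alpha(\omega)t)\bigr)\,d\mu(\omega)=(s-t)^2\int_\Omega\eta\,\alpha^2\,d\mu,
\]
so $t\mapsto F(\sigma(t))$ has constant speed $\bigl(\int_\Omega\eta\,\alpha^2\,d\mu\bigr)^{1/2}$ and is therefore a linearly reparametrized geodesic in the uniquely geodesic space $Y$; this shows $F$ is affine, while $d_Y^2(F(f),F(g))\le\|\eta\|_\infty\,d_{L^2}^2(f,g)$ shows it is Lipschitz.

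For the ``only if'' direction the strategy is to localize $F$ on simple functions, import the product rigidity of Lemma~\ref{lem_affine}, and assemble the resulting local scaling factors into a measure. Given a finite measurable partition $\mathcal P=\{A_1,\dots,A_n\}$ of $\Omega$, the set $S_{\mathcal P}\subset L^2(\Omega,X)$ of functions constant on each $A_i$ is convex: by uniqueness of geodesics in $\mathrm{CAT}(0)$ spaces together with Theorem~\ref{thm: monodgeodesics_concise}, the fibrewise geodesic between two such functions again lies in $S_{\mathcal P}$. Hence $F|_{S_{\mathcal P}}$ is affine. Identifying $S_{\mathcal P}$ with the point set $X^n$ and noting that a map is affine for the product metric precisely when it is affine after rescaling the factor metrics (the geodesics coincide as point sets, only their speeds change), Lemma~\ref{lem_affine} yields constants $c_i\ge 0$ with $d_Y^2(F(f),F(g))=\sum_{i=1}^n c_i^2\,d_X^2\bigl(f|_{A_i},g|_{A_i}\bigr)$ for all $f,g\in S_{\mathcal P}$.

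The heart of the argument is the bookkeeping that follows. I would define $\lambda(A):=c_1^2$ using the two-set partition $\{A,A^c\}$, so that $d_Y^2(F(f),F(g))=\lambda(A)\,d_X^2(f|_A,g|_A)$ whenever $f,g$ differ only on $A$ and are constant there; choosing distinct values (possible since $X$ is not a point) makes $\lambda(A)$ well defined and independent of the chosen values. Comparing a partition with a refinement, evaluated on functions constant on the coarser one and varying a single factor at a time, forces $\lambda(A_i)=\sum_j\lambda(B_{ij})$, i.e. finite additivity of $\lambda$. The Lipschitz hypothesis supplies the crucial estimate $\lambda(A)\le L^2\mu(A)$ (compare the two squared-distance expressions for functions differing only on $A$), which gives simultaneously $\lambda\ll\mu$ and, through $\lambda(R_N)\le L^2\mu(R_N)\to 0$ on the tails $R_N=\bigsqcup_{n>N}A_n$ of a disjoint union, countable additivity. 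Radon–Nikodym then produces $\eta:=d\lambda/d\mu\in L^\infty$ with $0\le\eta\le L^2$. On simple functions the desired identity reads $\sum_i\lambda(A_i)\,d_X^2(x_i,y_i)=\int_\Omega\eta\,d_X^2(f,g)\,d\mu$, and both sides extend continuously to all of $L^2(\Omega,X)$ — the left by Lipschitz continuity of $F$, the right by boundedness of $\eta$ and dominated convergence — so density of simple functions finishes the proof. I expect the main obstacle to be exactly this middle step: checking that the partition-dependent factors $c_i$ patch into a single \emph{countably additive} measure $\lambda\ll\mu$, where refinement-consistency and the passage from finite to countable additivity both hinge delicately on the estimate $\lambda(A)\le L^2\mu(A)$.
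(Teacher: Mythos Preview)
Your proposal is correct and follows essentially the same route the paper indicates: the paper does not spell out a proof but defers to the argument of \cite[Theorem~4.1]{isom} with Lemma~\ref{lem_affine} substituted for the Riemannian product result, and what you outline---restricting $F$ to the convex sets $S_{\mathcal P}$ of simple functions, applying Lemma~\ref{lem_affine} to obtain factor-wise scaling constants, assembling these into a finitely additive set function $\lambda$, using the Lipschitz bound $\lambda(A)\le L^2\mu(A)$ to obtain countable additivity and $\eta=d\lambda/d\mu\in L^\infty$, and extending by density of simple functions---is precisely that argument. Your treatment of the easy direction via Theorem~\ref{thm: monodgeodesics_concise} is also the natural one.
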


Suppose $X$ is as in the assumptions of Theorem~\ref{affine_classical} and the space decomposes as a metric product: $L^2(\Omega,X)=Y\times \overline Y$. 

Since $Y\times \overline Y$ is CAT$(0)$ if and only both factors are CAT$(0)$, both $Y$ and $\overline Y$ are CAT$(0)$. The projections $P^Y:L^2(\Omega,X) \to Y$ and $P^{\overline Y}:L^2(\Omega,X) \to \overline Y$ are thus affine maps satisfying the assumptions of Theorem~\ref{affine_classical}. 

Therefore, there exist non-negative scaling functions $\eta,\overline \eta \in L^\infty(\Omega)$ such that for all $f,g\in L^2(\Omega,X)$, the squared distances in the projected spaces are given by: $$d_Y^2(P^Y(f),P^Y(g))=\int_\Omega\eta \,d_X^2(f,g)\, d\mu,$$ $$d_{\overline Y}^2(P^{\overline Y}(f),P^{\overline Y}(g))=\int_\Omega \overline\eta \,d_X^2(f,g)\, d\mu.$$

Finally, again arguing analogously to \cite{isom}, we see that there exists a measurable $A\subset \Omega$ such that $\eta=\chi_A$ and $\overline \eta = \chi_{A^c}$. This leads by the same arguments to an analogue of the \textit{isometric localization} result Lemma~5.2 in \cite{isom}:

\begin{proposition}\label{loc}
	Let $(\Omega,\mu)$ be a standard probability space and let $X$ be a geodesically complete CAT$(0)$ space which is affinely rigid with respect to CAT$(0)$ spaces and not isometric to $\R$. 
	
	Then, for any isometry $\gamma:L^2(\Omega,X) \to L^2(\Omega,X)$ there exists an almost everywhere bijective $\varphi:\Omega \to \Omega$ such that $\varphi_\ast \mu \sim \mu$, and for every measurable $A\subset \Omega$ and $f,g \in L^2(\Omega,X)$, \[\int_A d_X^2(f,g) \,d\mu= \int_{\varphi^{-1}(A)} d_X^2(\gamma(f),\gamma(g))d\mu.\]
\end{proposition}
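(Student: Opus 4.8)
The plan is to deduce Proposition~\ref{loc} from the distance-projection data $\eta, \overline\eta$ accumulated just before the statement, together with Theorem~\ref{affine_classical}. First I would observe that an isometry $\gamma: L^2(\Omega,X) \to L^2(\Omega,X)$ need not be affine a priori, so the immediate task is to produce a \emph{set function} on measurable subsets of $\Omega$ that records how $\gamma$ redistributes the squared-distance ``mass.'' The natural object is the map sending a measurable $A \subset \Omega$ to the affine projection-type quantity $\int_A d_X^2(\cdot,\cdot)\,d\mu$ pulled back through $\gamma$. Concretely, for each measurable $A$ I would consider the splitting $L^2(\Omega,X) = L^2(A,X) \times L^2(A^c,X)$, note that this is a metric product of CAT$(0)$ spaces, and study how $\gamma$ interacts with it. Since $\gamma$ is an isometry, $\gamma$ conjugates this splitting to another splitting $L^2(\Omega,X) = Y \times \overline Y$ with $Y = \gamma(L^2(A,X) \times \{*\})$; the point of the preliminary discussion is that the projections onto $Y,\overline Y$ are affine Lipschitz maps, hence by Theorem~\ref{affine_classical} governed by weights $\eta_A, \overline\eta_A \in L^\infty(\Omega)$, and the complementary splitting forces $\eta_A = \chi_{B}$, $\overline\eta_A = \chi_{B^c}$ for some measurable $B = B(A) \subset \Omega$.

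The core of the argument is then to show that the assignment $A \mapsto B(A)$ comes from a single point transformation $\varphi$ of $\Omega$. First I would verify that $A \mapsto B(A)$ is (modulo null sets) a Boolean isomorphism of the measure algebra: it is monotone, respects complements (the splitting $A \sqcup A^c$ maps to $B \sqcup B^c$), and is countably additive on disjoint unions, because iterating the splitting argument on a partition $\Omega = \bigsqcup_i A_i$ produces an orthogonal family of sub-splittings whose weights must be the indicator functions $\chi_{B(A_i)}$ of a measurable partition. Concretely, for $f,g \in L^2(\Omega,X)$ one gets the identity
\[
\int_A d_X^2(f,g)\,d\mu \;=\; \int_{B(A)} d_X^2(\gamma(f),\gamma(g))\,d\mu,
\]
valid for every measurable $A$, which already captures the conclusion once $B$ is identified with $\varphi^{-1}$. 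To pass from a measure-algebra isomorphism to an a.e.\ bijective point map $\varphi:\Omega\to\Omega$ with $B(A)=\varphi^{-1}(A)$, I would invoke the standardness of $(\Omega,\mu)$: on a Lebesgue--Rokhlin space every measure-algebra automorphism is induced, modulo null sets, by a point automorphism (this is exactly the von Neumann / Rokhlin realization theorem, which is why the hypothesis ``standard probability space'' is imposed). The absolute-continuity condition $\varphi_\ast\mu \sim \mu$ then falls out automatically: $B(A)$ is $\mu$-null if and only if $A$ is, since a null factor contributes trivially to both splittings, and this two-sided implication is precisely mutual absolute continuity.

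I expect the main obstacle to be establishing that the set function $A \mapsto B(A)$ is genuinely a $\sigma$-complete Boolean isomorphism rather than merely a finitely-additive, monotone assignment — in particular, controlling the behavior on countable partitions and ruling out ``leakage'' of mass under $\gamma$ across the splitting. The delicate point is that a priori the weights $\eta_A$ could be arbitrary $L^\infty$ functions valued in $[0,1]$; it is the rigidity coming from considering \emph{complementary} splittings simultaneously (forcing $\eta_A + \eta_{A^c}$ to be the constant $1$, and each summand to be an honest indicator) that collapses them to characteristic functions. Verifying this, and then checking countable additivity so that the Rokhlin realization applies, is where the real work lies; once $\varphi$ is produced the displayed integral identity is immediate by specializing the weight identity $\eta_A = \chi_{\varphi^{-1}(A)}$ to the squared-distance functional. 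Since this entire chain of reasoning is the verbatim analogue of Lemma~5.2 in \cite{isom}, I would carry out each step ``arguing analogously to \cite{isom},'' flagging only the two places where CAT$(0)$ geometry (via Lemma~\ref{lem_affine} and Theorem~\ref{affine_classical}) replaces the Riemannian input of that reference.
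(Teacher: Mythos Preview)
Your proposal is correct and follows essentially the same route as the paper: the paper also derives Proposition~\ref{loc} by transporting the canonical splittings $L^2(A,X)\times L^2(A^c,X)$ through $\gamma$, applying Theorem~\ref{affine_classical} to the resulting projections to obtain indicator weights $\chi_B,\chi_{B^c}$, and then invoking the standard-probability-space realization of the induced measure-algebra automorphism to produce $\varphi$ --- all ``arguing analogously to \cite{isom}'' with Lemma~\ref{lem_affine}/Theorem~\ref{affine_classical} substituted for the Riemannian input. Your write-up is in fact a more explicit unpacking of what the paper leaves as a reference to \cite{isom}.
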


\subsection{Isometric rigidity}

We are finally in a position to prove Theorem~\ref{L^2isometry} from the introduction.

\begin{proof}[Proof of Theorem~\ref{L^2isometry}]
	First assume that there exist $\varphi:\Omega \to \Omega$ and $\rho:\Omega \to \operatorname{Dil}(X)$ as above and that for almost all $\omega \in \Omega$, $\gamma(f)(\omega)=\rho(\varphi(\omega))(f(\varphi(\omega)).$ Observe that since $\rho(\cdot)(x)\in L^2(\Omega,X)$ for some $x\in X$, $\gamma(f)\in L^2(\Omega,X)$ for any $f\in L^2(\Omega,X)$. Furthermore, notice that \begin{align*}
		\int_\Omega d_X^2(\gamma(f),\gamma(g))d\mu&=\int_\Omega d_X^2(\gamma(f)\circ \varphi^{-1},\gamma(g)\circ \varphi^{-1})d(\varphi_\ast\mu)\\&=\int_\Omega d_X^2(\gamma(f)\circ \varphi^{-1},\gamma(g)\circ \varphi^{-1})\frac{d(\varphi_\ast\mu)}{d\mu}d\mu \\&=\int_\Omega d_X^2(f,g)d\mu.	\end{align*}
		Thus $\gamma:L^2(\Omega,X)\to L^2(\Omega,X)$ is an isometry as claimed, establishing one direction of the proof. 
		
		For the other direction, assume we are given an isometry $$\gamma:L^2(\Omega,X)\to L^2(\Omega,X).$$ By Proposition~\ref{loc}, there exists an almost everywhere bijective $\varphi:\Omega\to \Omega$ such that $\varphi_\ast \sim \mu$, and for all $A\subset \Omega$ and $f,g \in L^2(\Omega,X)$, \[\int_A d_X^2(f,g) \,d\mu= \int_{\varphi^{-1}(A)} d_X^2(\gamma(f),\gamma(g))d\mu.\]
		Since  $\varphi_\ast \mu \ll \mu$ and $\mu \ll \varphi_\ast \mu$, by Radon-Nikodym, there exists a $\mu$-a.e. positive Radon-Nikodym derivative $\delta:=\frac{d(\varphi_\ast\mu)}{d\mu}:\Omega\to \R$. 
		
		Thus, by a change of variable, for all $A\subset \Omega$ and $f,g\in L^2(\Omega,X)$, 
		\begin{align*}
		\int_A d_X^2(f,g)d\mu&=\int_{\varphi^{-1}(A)} d_X^2(\gamma(f),\gamma(g))d\mu\\&=\int_A d_X^2(\gamma(f)\circ \varphi^{-1},\gamma(g)\circ \varphi^{-1})\frac{d(\varphi_\ast\mu)}{d\mu}d\mu.\end{align*}
Hence, for all $f,g\in L^2(\Omega,X)$, we deduce that for $\mu$-a.e. $\omega\in \Omega$, \begin{align}\label{eqn}
	d_X(f(\omega),g(\omega))=\delta(\omega)^{\frac{1}{2}} \, d_X(\gamma(f)(\varphi^{-1}(\omega)),\gamma(g)(\varphi^{-1}(\omega))).
\end{align}

Fix a measure space isomorphism $\psi:(\Omega,\mu) \to ([0,1]\sqcup \N,c\lambda+\sum_{j\in \N}p_j\delta_j)$ with $c,p_j\geq 0$ and $c+\sum_{j\in \N}p_j=1$ (which exists because $\Omega$ is a standard probability space). 

For $x\in X^n$ and $y\in X^m$, we define a simple function $f_{x,y}:\Omega \to X$ by $f_{x,y}(\omega)=x_i$ for $\psi(\omega)\in [\frac{i-1}{n},\frac{i}{n})$ and $f_{x,y}(\omega)=y_j$ for $\psi(\omega)=j \leq m$ and $f_{x,y}(\omega)=y_m$ otherwise. Of course $f_{x,y}\in L^2(\Omega,X)$. For $x\in X=X^1$, we write $f_x:=f_{x,x}$ and this simply denotes the constant function. 

Since $X$ is separable, there exists a countable dense subset $D\subset X$. By \cite{isom}, the space $\mathcal S$ of all simple functions over finite measurable partitions is dense in $L^2(\Omega,X)$. In analogy to the case of the classical $L^2(\Omega)=L^2(\Omega,\R)$, it is easily seen that the countable subspace $$\mathcal D:=\{f_{x,y}: x,y\in \bigsqcup_{n\in \N} D^n\}\subset \mathcal S$$ is also dense. This recovers the fact that $L^2(\Omega,X)$ is separable.

Since countable intersections of full-measure subsets are of full measure, there exists a full-measure subset $\Omega^\prime \subset \Omega$ where equation~(\ref{eqn}) holds for all $f,g\in \mathcal D$.

This allows us to define the family of dilations $\rho:\Omega \to \text{Dil}(X)$ such that for $\mu$-a.e. $\omega\in \Omega$, $\gamma(f)(\omega)=\rho(\varphi(\omega))(f(\varphi(\omega))$ holds for all $f\in \mathcal D$.

Specifically, for $\omega\in \Omega^\prime$ and $x\in D$, define $\rho(\omega)(x):=\gamma(f_x)(\varphi^{-1}(\omega))$. Equation~(\ref{eqn}) then implies that $\rho(\omega)$ is a dilation on $D$:
\begin{align*} d_X(\rho(\omega)(x),\rho(\omega)(x^\prime)) &= d_X(\gamma(f_x)(\varphi^{-1}(\omega)),\gamma(f_{x^\prime})(\varphi^{-1}(\omega))) \\&= \delta(\omega)^{-\frac{1}{2}}\,d_X(x,x^\prime).\end{align*}
By continuity, $\rho(\omega)$ extends to a $\delta(\omega)^{-\frac{1}{2}}$-dilation on $X$.

Furthermore, for all $\omega\in \Omega^\prime$ and $f\in \mathcal D$, equation~(\ref{eqn}) yields 
$$d_X(\rho(\omega)(f(\omega)),\gamma(f)(\varphi^{-1}(\omega))) = \delta(\omega)^{-\frac{1}{2}}d(f(\omega),f(\omega))=0,$$
meaning $\rho(\omega)(f(\omega))=\gamma(f)(\varphi^{-1}(\omega))$.

Thus by substituting $\varphi(\omega)$, we see that for $\mu$-a.e. $\omega\in \Omega$, $\gamma(f)(\omega)=\rho(\varphi(\omega))(f(\varphi(\omega)))$ for all $f\in \mathcal D$.

Finally, by the density of $\mathcal D$ in $L^2(\Omega,X)$, this identity extends to all $f\in L^2(\Omega,X)$, proving the claim. 
\end{proof}

\section{Proof of Theorem~\ref{isom_completion}}\label{proof_isom_c}

\begin{proof}[Proof of Theorem~\ref{isom_completion}]
	Recall that by \cite[Theorem 3]{C}, the space $\mathcal E(M)$ is isometric to the space $L^2(M,\overline{P(n)})$. The isometry $\Psi:\mathcal E(M) \to L^2(M,\overline{P(n)})$ was constructed as follows: Take a triangulation of $M$, with $V_1,...,V_k$ the interiors of maximal dimensional simplices  which are contained in domains of trivializing bundle charts of $\overline{E}$ sending $\mu_{g_0}$ to the standard Euclidean volume form. For a section $\sigma: M \to \overline{E}$ in $\mathcal E(M)$, we define a function $f_{\sigma}:M \to \overline{P(n)}$ by locally composing $\sigma$ with the bundle trivialization and the projection onto the fiber. Specifically, on each $V_i$, $f_{\sigma}|_{V_i} := \pi_2 \circ \phi_i \circ \sigma|_{V_i}$, where $\phi_i:\pi^{-1}(V_i)\to V_i\times \overline{P(n)}$ is the local trivialization and $\pi_2$ is the projection on the second component. Since the complement of $\cup V_i$ is a null set, $f_{\sigma}$ is defined $\mu_{g_0}$-almost everywhere. The map $\Psi$ is then given by $\sigma \mapsto f_\sigma$, and by the fact that for every $p\in V_i$, $\Psi_p:=\pi_2\circ \phi_i\vert_{\overline{E}_p}:(\overline{E}_p,d_p)\to (\overline{P(n)},d_{P(n)}^\prime)$ is an isometry, we conclude that $\Psi$ is an isometry and given point-wise for a.e. $p\in M$ by $\Psi(\sigma)(p)=\Psi_p(\sigma_p)$.

	As a consequence, for an isometry $\gamma: \mathcal E(M) \to \mathcal E(M)$, the map $\Psi\circ \gamma \circ \Psi^{-1}:L^2(M,\overline{P(n)}) \to L^2(M,\overline{P(n)})$ is an isometry as well. 
	
	By Lemma~\ref{cone_P}, $\overline{P(n)}$ is isometric to the Euclidean cone $C_0(P_1(n))$. As a symmetric space of non-compact type $(P_1(n),d_{P(n)})$ is a geodesically complete CAT$(0)$ space and so, by Theorem~\ref{cone_prop_a}, $C_0(P_1(n))$ is affinely rigid. Finally, since geodesics branch in the tip, the space can not be isometric to $\R$. 
	
	Therefore Theorem~\ref{L^2isometry} is applicable and we know that $\Psi\circ \gamma \circ \Psi^{-1}$ is of the form $(\Psi\circ\gamma\circ \Psi^{-1})(f)(p)=\rho(\varphi(p))(f(\varphi(p)))$ for almost all $p\in M$, where $\varphi:M \to M$ is an almost everywhere bijective map with $\varphi_\ast \mu_{g_0} \sim \mu_{g_0}$, and $\rho:\Omega \to \operatorname{Dil}(X)$ a family of dilations with $\operatorname{dil}(\rho(p))>0$ for almost all $p\in M$. 
	
	Therefore, we conclude that for $\sigma \in \mathcal E(M)$ and almost all $p\in M$, \[\gamma(\sigma)_p=\underbrace{(\Psi^{-1}_{p}\circ \rho(\varphi(p))\circ \Psi_{\varphi(p)})}_{:=\tau_p \in \text{Dil}(E_{\varphi(p)},E_p)}(\sigma_{\varphi(p)}).\]
		 
\end{proof}

\section{Proofs of the main results}\label{proof_main}

\begin{remark}\label{remark}

	Recall that by \cite{MR267604}, the push-forward $g\mapsto \varphi^\ast g$ is an isometry of $\mathcal{M}(M)$ with respect to the Ebin metric. 
	
	We define $\tau^\varphi_p:E_{\varphi(p)}\to E_p$ by $\tau^\varphi_p(\sigma)(v,w)=\sigma(d\varphi_pv, d\varphi_pw)$, where $v,w\in T_pM$, and observe that $(\varphi^\ast g)_p=\tau^\varphi_p(g_{\varphi(p)})$. 
	
	As done before, by taking charts around $\varphi(p)$ and $p$ respectively that send $\mu_{g_0}$ to the standard Euclidean volume form, we obtain isometries $\Psi_{\varphi(p)}:(E_{\varphi(p)},d_{\varphi(p)})\to (P(n),d_{P(n)})$ and $\Psi_{p}:(E_{p},d_p)\to (P(n),d_{P(n)})$. The map $\Psi_p\circ \tau^\varphi_p \circ \Psi_{\varphi(p)}^{-1}:P(n) \to P(n)$ is of the form $P\mapsto J P J^T$, where $J \in GL(n,\R)$ represents $d\varphi_p$ with respect to the coordinate bases of the two charts from above; this map is a dilation with respect to $d_{P(n)}^\prime$ and therefore $\tau^\varphi_p:(E_{\varphi(p)},d_{\varphi(p)})\to (E_p,d_p)$ is a dilation as well. 

\end{remark}

\begin{proof}[Proof of Theorem~\ref{isom}]

 Every isometry $\gamma: \mathcal{M}(M) \to \mathcal{M}(M)$ extends uniquely to an isometry $\overline{\gamma}:\mathcal E(M) \to \mathcal E(M)$. By Theorem~\ref{isom_completion}, there exist an a.e. bijective $\varphi:M\to M$ and an a.e. defined family $\{\tau_p\}_{p\in M}$ of dilations $\tau_p:(\overline E_{\varphi(p)},d_{\varphi(p)}) \to  (\overline E_p,d_p)$ with $\text{dil}(\tau_p)>0$, such that \begin{align*}
 	\overline{\gamma}(\sigma)_p=\tau_p(\sigma_{\varphi(p)}).
 \end{align*}

The map $\overline \gamma$ is $\mathbb{R}^+$-homogeneous, satisfying $\overline \gamma(a\cdot \sigma)=a\cdot \overline \gamma(\sigma)$ for every $\sigma \in \mathcal{E}(M)$ and $a\geq 0$. This homogeneity is ensured because of the linear identifications $\Psi_p:\overline E_p \to \overline{P(n)}\cong C_0(P_1(n))$ for all $p\in M$, and the fact that any isometry of the cone $C_0(P_1(n))$ must preserve its tip. This preservation holds because, unlike the tip where geodesics branch, the remaining space $C_0(P_1(n))\setminus \{0\}\cong (P(n),g^\prime)$ is a Riemannian manifold in which geodesics never branch.

As a consequence, for all Riemannian metrics $g\in \mathcal{M}(M)$ and positive smooth functions $f\in C^\infty_+(M)$, we obtain $\overline\gamma(f\cdot g)=\hat f \cdot \overline \gamma(g)$, where $\hat f(p):=f(\varphi(p))$. Since by assumption, $\overline \gamma$ preserves $\mathcal{M}(M)$, both $\overline\gamma(g)\in \mathcal M(M)$ and $\overline\gamma(f\cdot g)\in \mathcal M(M)$. As a consequence, $\hat f \in C^\infty_+(M)$.

 Since both $\widehat{f+f^\prime}=\hat f + \hat{f^\prime}$ and $\widehat{ff^\prime}=\hat f \cdot \hat f^\prime$, the assignment $f\mapsto \hat f$ extends to a ring automorphism $\alpha:C^\infty(M) \to C^\infty(M)$. This automorphism originates from a diffeomorphism $\widetilde \varphi: M \to M$, i.e. $\alpha(f)=f\circ \widetilde\varphi$ for all $f\in C^\infty(M)$. Indeed this follows from the facts that ring automorphisms map maximal ideals to maximal ideals, and that the maximal ideals of $C^\infty(M)$ are of the form $I_p:=\{f\in C^\infty(M): f(p)=0\}$. See both \cite{MR2159792, MR2161810} and the references therein for more details.
 
  Hence for a.e. $p\in M$ and $f\in C^\infty(M)$, we have that $f(\varphi(p))=\alpha(f)(p)=f(\widetilde \varphi(p))$. This implies that $\varphi=\widetilde \varphi$ almost everywhere, and so in what follows we can simply assume that $\widetilde\varphi = \varphi$ from the beginning.

Now consider the isometry $ \eta:= \gamma \circ (\varphi^\ast)^{-1}: \mathcal M(M) \to \mathcal M(M)$. By the above rigidity and Remark~\ref{remark} this isometry is of the form $\overline \eta(\sigma)_p=\alpha_p(\sigma_p)$, where $\alpha_p:E_p \to E_p$, $p\in M$ have to be isometries. The family $\{\alpha_p\}_{p\in M}$ forms a section of the fibre bundle $\text{Isom}(E)$ with fibre the Lie group $G= \text{Isom}(C_0(P_1(n))) \cong \text{Isom}(P_1(n))$.   

 It remains to show that this section is smooth. To that end, take a local trivialization of the bundle $\overline E$, $(\pi_{\overline E})^{-1}(U) \to U \times C_0(P_1(n))$, $a \mapsto (\pi_{\overline E}(a), \psi_{\pi_{\overline E}(a)}(a))$, where for all $p\in U$, $\psi_p:(E_p,d_p) \to C_0(P_1(n))$ is a dilation. To show the smoothness of $\alpha$, we have to show that $\widetilde\alpha:U \to G$, $p\mapsto \psi_p\circ \alpha_p \circ \psi_p^{-1}\in G$ is smooth. 
 
There exist points $x_0,\dots,x_n\in P_1(n)$ such that the smooth map $\delta:G \to P_1(n)^{n+1}$, given by $\delta(g)=(g\cdot x_0,\dots,g\cdot x_n)$, is an immersion. To see this, fix $x_0 \in P_1(n)$ and an orthonormal frame $e_1,\dots,e_n$ of $T_{x_0}P_1(n)$, and define $x_i := \exp_{x_0}(e_i)$ for $i=1,\dots,n$. Since $P_1(n)$ is a CAT$(0)$ space, $t\mapsto \exp_{x_0}(te_i)$ is a minimizing geodesic. As isometries preserve minimizing geodesics, the condition $(g_1\cdot x_0,\dots, g_1\cdot x_n)=(g_2\cdot x_0,\dots, g_2\cdot x_n)$ implies $d_{x_0}g_1=d_{x_0}g_2$, which yields $g_1=g_2$; thus, $\delta$ is injective. Since $\delta$ is $G$-equivariant, it has constant rank, and therefore, by the constant rank theorem, the injective map $\delta$ is an immersion. 
 
By a partition of unity type argument, we can construct smooth metrics $g_0,g_1,...,g_n$ such that $\psi_p(g_i)=x_i$ for all $p\in U$ and  $i=0,1,\dots,n$. By our assumptions, $\alpha$ preserves the space of smooth Riemannian metrics and thus the map $U\to P(n)$, $p\mapsto (\psi_p \circ \alpha_p \circ \psi_p^{-1})(\psi_p(g_i))$, $i=0,1,...,n$ is smooth.  
 Thus the composition of $\widetilde\alpha$ and $\delta$ is smooth, i.e. $U\xrightarrow{\widetilde\alpha} G \xrightarrow{\delta} P_1(n)^{n+1}$ and therefore, since $\delta$ is an immersion we conclude that $\widetilde\alpha:U \to G$ is smooth also. Thus $\alpha \in \Gamma(\text{Isom}(E))$. 
 
 In other words, we have shown that $\gamma=\alpha \circ \varphi^\ast$, and so we deduce  $\text{Isom}(\mathcal M(M),d_E)= \Gamma(\text{Isom}(E))\cdot \text{Diff}(M)$. Further, we clearly have that $\Gamma(\text{Isom}(E))\cap \text{Diff}(M)$ is trivial. Finally we show that $\Gamma(\text{Isom}(E))$ is a normal subgroup of $\text{Isom}(\mathcal M(M),d_E)$: take any $\gamma\in \text{Isom}(\mathcal M(M),d_E)$ and $\beta \in \Gamma(\text{Isom}(E))$. By the above, there exist $\alpha \in \Gamma(\text{Isom}(E))$ and $ \varphi\in \text{Diff}(M)$ such that $\gamma=\alpha \varphi^\ast$. Clearly $\gamma  \beta  \gamma^{-1}=\alpha  \varphi^\ast  \beta  (\alpha  \varphi^\ast)^{-1}=\alpha  \varphi^\ast  \beta  (\varphi^{-1})^\ast \alpha$ and so by Remark~\ref{remark}, $$(\gamma  \beta  \gamma^{-1})(g)_p=(\alpha_p \tau^\varphi_p\beta_{\varphi(p)}\tau^{\varphi^{-1}}_{\varphi(p)}\circ \alpha_p)(g_p),$$ and we infer that $\gamma \beta \gamma^{-1} \in \Gamma(\text{Isom}(E))$. 
 Therefore, we conclude: $$\text{Isom}(\mathcal M(M),d_E)=\Gamma(\text{Isom}(E(M)))\rtimes\text{Diff}(M).$$
  \end{proof}

\begin{proof}[Proof of Theorem~\ref{main}]
	
	Every isometry $\gamma:(\mathcal{M}(M),d_E) \to (\mathcal{M}(N),d_E)$ uniquely extends to an isometry $\overline\gamma: (\mathcal E(M),d_{L^2}) \to (\mathcal E(N),d_{L^2})$. 
	
	By \cite{MR1308547}, any compact Riemannian manifold with the normalized volume measure is an atomless standard probability space. Therefore, there exists an isomorphism of probability spaces $\psi:(M,\mu_{g_0})\to (N,\mu_{h_0})$, where $\mu_{g_0}$ and $\mu_{h_0}$ are the volume measures with respect to fixed background metrics $g_0$ and $h_0$ of total volume one on $M$ and $N$ respectively. 
	
	Choose some family of isometries $\beta_p:\overline{E(N)}_{\psi(p)} \to \overline{E(M)}_p$, $p\in M$. By the change of variable formula, the map $\gamma_\psi:\mathcal{E}(N) \to \mathcal{E}(M)$, given by $\gamma_\psi(\sigma)_p=\beta_p(\sigma_{\psi(p)})$ is an isometry. 	
	
We then obtain the isometry $\gamma_\psi\circ \overline \gamma: \mathcal{E}_{L^2}(M) \to \mathcal{E}_{L^2}(M)$. By Theorem~A, we know that there exist an almost everywhere bijective $\varphi:M\to M$ and a family $\{\tau_p\}_{p\in M}$ of dilations $\tau_p:(\overline{E(M)}_{\varphi(p)},d_{\varphi(p)}) \to  (\overline{E(M)}_p,d_p)$ with positive factor, such that $(\gamma_\psi\circ\overline\gamma)(\sigma)_p=\tau_{p}(\sigma_{\varphi(p)})$. Therefore, $\overline \gamma(\sigma)_{p}=\kappa_p(\sigma_{\varphi(\psi^{-1}(p))})$, with dilations $\kappa_p:=\beta_{\psi^{-1}(p)}^{-1}\circ \tau_{\psi^{-1}(p)}:\overline{E(M)}_{\varphi(\psi^{-1}(p))} \to \overline{E(N)}_p$. 
	
	Now we run an almost identical argument as in the proof of Theorem~\ref{isom_completion}: for a smooth metric $g\in \mathcal{M}(M)$ and a positive function $f\in C^\infty_+(M)$, $f\cdot g \in \mathcal{M}(M)$ and $\overline \gamma(f\cdot g)=\hat f \cdot \overline \gamma(g)$, where $\hat f:N \to \R_{>0}$ is given a.e. by $\hat f(p)=f(\varphi(\psi^{-1}(p)))$. Since both $\overline\gamma(f\cdot g)$ and $\overline\gamma(g)$ lie in the space of smooth metrics $\mathcal{M}(N)$, we deduce that $\hat f\in C^\infty_+(N)$. Thus we obtain a map $C^\infty_+(M) \to C^\infty_+(N)$, $f \mapsto \hat f$ that respects products and sums. This extends to a ring isomorphism $C^\infty(M) \to C^\infty(N)$. By the same reasoning as above, this isomorphism is induced by a diffeomorphism $\widetilde \varphi:M \to N$. Thus $M$ and $N$ are diffeomorphic. 
\end{proof}

\small\section*{Acknowledgments} I would like to thank Nicola Cavallucci for insightful conversations on his research. I am grateful to Urs Lang for supervising my Master's thesis, which originally motivated my interest in these questions, and Alexander Lytchak for inspiring discussions and helpful suggestions. Finally I thank Maximilian Wackenhuth for discussions on this project. This
research was partially supported by the Deutsche Forschungsgemeinschaft (DFG, German Research Foundation) under project number 281869850.

\bibliographystyle{alpha}
\bibliography{paper}

\end{document}